\numberwithin{equation}{section}
\newtheorem{thm}{Theorem}[section]
\newtheorem{prop}[thm]{Proposition}
\theoremstyle{definition}
\newtheorem{rem}[thm]{Remark}
\newcommand\R{{\mathbb R}}
\newcommand\C{{\mathbb C}}
\newcommand\N{{\mathbb N}}
\newcommand\Tma{T_{\mathrm{max}}}
\newcommand\Ens{{\mathcal E}}
\newcommand\Spa{{\mathcal X}}
\newcommand\dist{{\mathrm{d}}}
\newcommand\Srn{{\mathcal S}(\R^N )}
\newcommand\Imqu{{m}}
\newcommand\Imqd{{n}}
\newcommand\Imqt{{k}}
\newcommand\Imqc{{j}}
\newcommand\Imqs{{\mu}}
\newcommand\Imqp{{\nu}}
\newcommand\Imqh{{J}}
\newcommand\CTd{C_1}
\newcommand\CTc{C_2}
\newcommand\CTu{C_3}
\newcommand\CTs{C_4}
\newcommand\CTq{C_5}
\newcommand\goto{\mathop{\longrightarrow}}
\newcommand\MScN[1]{\href{http://www.ams.org/mathscinet-getitem?mr=#1}{\nolinkurl{(#1)}}}
\newcommand\DOI[1]{\href{http://dx.doi.org/#1}{(doi: \nolinkurl{#1})}}
\newcommand\LINK[1]{\href{#1}{(link: \nolinkurl{#1})}}
\newcommand\DI{u_0 }
\newcommand\DIb{v_0 }
\begin{document}

\title{Modified scattering for the critical nonlinear Schr\"o\-din\-ger equation}

\def\shorttitle{Modified scattering}

\author[T. Cazenave]{Thierry Cazenave$^1$}
\email{\href{mailto:thierry.cazenave@upmc.fr}{thierry.cazenave@upmc.fr}}

\author[I. Naumkin]{Ivan Naumkin$^2$}
\email{\href{mailto:ivan.naumkin@unice.fr}{ivan.naumkin@unice.fr}}

\address{$^1$Universit\'e Pierre et Marie Curie \& CNRS, Laboratoire Jacques-Louis Lions,
B.C. 187, 4 place Jussieu, 75252 Paris Cedex 05, France}

\address{$^2$Laboratoire J.A. Dieudonn\'e,  UMR CNRS 7351, Universit\'e de Nice Sophia-Antipolis, Parc Valrose,  06108 Nice Cedex 02, France}

\subjclass[2010] {Primary 35Q55; secondary 35B40}

\keywords{nonlinear Schr\"o\-din\-ger  equation, pseudo-conformal transformation, modified scattering}

\thanks{Ivan Naumkin thanks the project ERC-2014-CdG 646.650 SingWave for its financial support, and the Laboratoire J.A. Dieudonn\'e of the Universit\'e de Nice Sophia-Antipolis for its kind hospitality.}

\begin{abstract}

We consider the nonlinear Schr\"o\-din\-ger equation 
\begin{equation*} 
iu_t + \Delta u= \lambda  |u|^{\frac {2} {N}} u  
\end{equation*} 
in all dimensions $N\ge 1$, where $\lambda \in \C$ and $\Im \lambda \le 0$. We construct a class of initial values for which the corresponding solution is global and decays as $t\to \infty $, like $t^{- \frac {N} {2}}$ if $\Im \lambda =0$ and like $(t \log t)^{- \frac {N} {2}}$ if $\Im \lambda <0$. Moreover, we give an asymptotic expansion of those solutions as $t\to \infty $.
We construct solutions that do not vanish, so as to avoid any issue related to the lack of regularity of the nonlinearity at $u=0$. To study the asymptotic behavior, we apply the pseudo-conformal transformation and estimate the solutions by allowing a certain growth of the Sobolev norms which depends on the order of regularity through a cascade of exponents.

\end{abstract}

\maketitle

%\tableofcontents

\section{Introduction}
In this article, we consider the nonlinear Schr\"o\-din\-ger  equation
\begin{equation} \label{NLS1}
\begin{cases}
iu_{t} + \Delta u = \lambda|u|^{\alpha}u \\
u(0,x)= \DI 
\end{cases}
\end{equation}
on $\R^N $, where 
\begin{equation} \label{fCA1} 
\alpha=\frac {2} {N}
\end{equation} 
 and 
\begin{equation}  \label{fCL1} 
  \Im \lambda \le 0
\end{equation} 
and its equivalent integral formulation
\begin{equation} \label{NLSI} 
u(t)= e^{it \Delta } \DI - i \lambda  \int _0^t e^{i (t-s) \Delta }  |u|^\alpha u \, ds
\end{equation} 
where $(e^{it \Delta }) _{ t\in \R }$ is the Schr\"o\-din\-ger group. 

It is well known that the Cauchy problem for~\eqref{NLS1}--\eqref{fCL1}  is globally well posed in a variety of spaces, for instance in $H^1 (\R^N ) $, in $L^2 (\R^N ) $, and in
\begin{equation} \label{sigma}
\Sigma = H^1 (\R^N ) \cap L^2 (\R^N  , |x|^2 dx).
\end{equation}
See e.g.~\cite{Kato}. 
Concerning the long time asymptotic behavior of the solutions, $\alpha =\frac {2} {N}$ is a limiting case. 
Indeed, for $\alpha >\frac {2} {N}$, there is low energy scattering, i.e. a solution of~\eqref{NLS1}  with a sufficiently small initial value (in some appropriate sense) is asymptotic as $t\to \infty $ to a solution of the free Schr\"o\-din\-ger equation. See~\cite{Strauss2, GinibreV1, GinibreV2, CazenaveW1, GinibreOV, NakanishiO, CN}.
On the other hand, if $\alpha \le \frac {2} {N}$, then low energy scattering cannot be expected, see~\cite[Theorem~3.2 and Example~3.3, p.~68]{Strauss} and~\cite{Barab}. 

In the case $\alpha =\frac {2} {N}$, the relevant notion is modified scattering, i.e. standard scattering modulated by a phase. 
When $\Im \lambda =0$, the existence of modified wave operators was established in~\cite{Ozawa1} in dimension $N=1$. More precisely, for all sufficiently small asymptotic state $u^+$, there exists a solution of~\eqref{NLS1} which behaves as $t\to \infty $ like $ e^{i \phi (t, \cdot )} e^{t\Delta }u^+$, where the phase $\phi $ is given explicitly in terms of $u^+$. (See also~\cite{Carles}. See~ \cite{HNST, ShimomuraT} for extensions in dimension $N=2$.)
Conversely, for small initial values, it was proved in~\cite{HayashiNau1} that the asymptotic behavior of the corresponding solution has this form when $\Im \lambda =0$, in dimensions $N=1, 2, 3$. (See also~\cite{KitaW}.) 
If $\Im \lambda <0$, then the nonlinearity has some dissipative effect, and an extra log decay appears in the description of the asymptotic behavior of the solutions. This was established in space dimensions $N=1, 2, 3$ in~\cite{Shimomura}. (See also~\cite{HayashiNau2, HayashiNau3} for related results.)

Our purpose in this article is to complete the previous results for~\eqref{NLS1}-\eqref{fCA1}. In order to state our results, we introduce some notation. 
We consider three integers ${k},{m},{n}$ such that 
\begin{equation} \label{n2} 
\Imqt > \frac {N} {2}, \quad \Imqd > \max \Bigl\{ \frac {N} {2} +1,  \frac {N} {2\alpha  } \Bigr\}, \quad  2 \Imqu \ge \Imqt + \Imqd +1
\end{equation} 
and we let
\begin{equation} \label{n11}
\Imqh = 2\Imqu +2 + \Imqt+ \Imqd .
\end{equation}
We consider  the Banach space $\Spa $ introduced in~\cite[formulas~(1.6) and~(1.7)]{CN}, i.e.
\begin{equation} \label{n27} 
\begin{split} 
\Spa=  \{ u\in H^\Imqh  (\R^N ); & \, \langle x\rangle ^\Imqd D^\beta u \in L^\infty  (\R^N )  \text{ for  }  0\le  |\beta |\le 2\Imqu  \\   \langle x\rangle ^\Imqd D^\beta u  \in L^2 & (\R^N )  \text{ for  }   2\Imqu +1 \le  |\beta | \le 2\Imqu +2 + \Imqt,  \\  \langle x\rangle ^{\Imqh -  |\beta |} D^\beta u & \in L^2  (\R^N )  \text{ for  }    2\Imqu +2 + \Imqt <  |\beta | \le J \}
\end{split} 
\end{equation} 
with
\begin{equation}  \label{n28}
 \| u \|_\Spa =  \sum_{ \Imqc =0 }^{2\Imqu  }   \sup _{  |\beta |=    \Imqc }  \| \langle \cdot \rangle ^\Imqd D^\beta  u  \| _{ L^\infty  } +   \sum_{\Imqp =0 }^{\Imqt +1} \sum_{ \Imqs =0 }^\Imqd \sup_{  |\beta  |=\Imqp + \Imqs  +  2 \Imqu  +1  }   \| \langle \cdot  \rangle ^{\Imqd -\Imqs } D^\beta  u \| _{ L^2 } 
\end{equation}
where
\begin{equation*} 
\langle x\rangle=(1+|x|^2 )^{\frac{1}{2}}.
\end{equation*} 
Our main results are the following.

\begin{thm} \label{T1}
Let $\lambda \in \R$.
Assume~\eqref{fCA1},   \eqref{n2}, \eqref{n11}, let $\Spa $ be defined
by~$\eqref{n27}$-$\eqref{n28}$, and ${\Sigma}$ by $\eqref{sigma}$. Suppose that
$ \DI (x) =e^{i\frac{b|x|^2 }{4}}\DIb (x)$, where $b\in \R $ and $\DIb 
\in \Spa $ satisfies
\begin{equation} \label{fTPA11} 
\inf_{x\in\R^N }\langle x\rangle^{n}\left|  \DIb (x)\right|  >0.
\end{equation} 
If $b>0$ is sufficiently large, then there exists a unique, global solution $u$ in the class
$C([0,\infty),{\Sigma}) \cap L^\infty ((0,\infty)\times\R^N ) \cap L^\infty ((0,\infty), H^1 (\R^N ) )$ of~\eqref{NLSI}. Moreover,
 there exist $\delta >0$ and $w_0  \in L^\infty  (\R^N )$ with  $\langle \cdot \rangle ^n w_0  \in L^\infty  (\R^N )$ and $h\not \equiv 0$ such that
\begin{equation} \label{T1n2:2}
\|  u (t, \cdot ) - z(t, \cdot ) \|  _{L^2} +  (1+t)^{\frac {N} {2} } \|  u (t, \cdot ) - z (t, \cdot )  \|  _{L^{\infty}} \le C (1+t)^{-\delta }
\end{equation}
where 
\begin{equation*} 
z(t,x)= (1+bt)^{-\frac{N}{2}} e^{i\Phi (  t,  \cdot  )  } w_0 \Bigl(
\frac{ \cdot }{1+bt} \Bigr)
\end{equation*} 
and
\begin{equation*} 
\Phi (  t,x )  = \frac {b|x|^2 } {4(1+bt)} - \frac {\lambda} {b} \Bigl| 
w_0 \Bigl(  \frac{x}{1+bt} \Bigr)  \Bigr|  ^{\frac {2} {N}} \log ( 1+bt ) .
\end{equation*} 
In addition, 
\begin{equation} \label{T1n1}
 t^{ \frac{N}{2}} \| u (t) \| _{ L^\infty  } \goto _{ t\to \infty  }  b^{- \frac {N} {2}}  \| w_0\| _{ L^\infty  }  .
\end{equation}
\end{thm}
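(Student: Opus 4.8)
The plan is to transform the problem on $[0,\infty)$ into one on the bounded interval $[0,1/b)$ by the pseudo-conformal transformation, to solve it there, and to transform back; the largeness of $b$ enters because $1/b$ is then a short time. Given a solution $u$ of~\eqref{NLSI} on $[0,\infty)$ with $\DI(x)=e^{ib|x|^2/4}\DIb(x)$, set for $0\le s<1/b$
\[ v(s,y)= (1-bs)^{-N/2}\, e^{-\frac{ib|y|^2}{4(1-bs)}}\, u\!\Bigl(\tfrac{s}{1-bs},\tfrac{y}{1-bs}\Bigr), \]
that is $v=\mathcal T_{-b}u$, where $\mathcal T_c$ denotes the pseudo-conformal transformation with parameter $c$. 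A direct computation shows that $v(0)=\DIb\in\Spa$ and that
\[ i v_s+\Delta v=\frac{\lambda}{1-bs}\,|v|^{2/N}v \qquad\text{on }[0,1/b)\,; \]
the exponent of $(1-bs)$ here equals $2-\tfrac{N\alpha}{2}=1$, which is the special feature of the critical power $\alpha=2/N$. Conversely $u=\mathcal T_b v$ recovers a solution of~\eqref{NLSI} on $[0,\infty)$. Since $\mathcal T_b$ is an isometry of $L^2$, and the substitution $s=t/(1+bt)$, $y=x/(1+bt)$ produces exactly the factor $(1+bt)^{-N/2}$ and the quadratic phase $b|x|^2/4(1+bt)$ appearing in $z$, the theorem reduces to two tasks: (i) solving the transformed equation on all of $[0,1/b)$ with suitable uniform bounds, and (ii) describing $v(s)$ as $s\to1/b$.

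For (i) I would combine the local theory of~\cite{CN} in $\Spa$ with a bootstrap on $[0,1/b)$. A key structural point is that $\frac{\lambda}{1-bs}$ is real and multiplies $|v|^{2/N}v$, so the singular factor disappears from the modulus equation: $\partial_s|v|^2=-2\,\Im(\bar v\,\Delta v)$, and likewise the weighted quantities $\langle y\rangle^{2j}|D^\beta v|^2$ obey evolution inequalities whose right-hand sides are controlled, without the singular factor, by the next level of a cascade of norms. Thus the low-order norms $\sup_{|\beta|\le2m}\|\langle\cdot\rangle^{n}D^\beta v(s)\|_{L^\infty}$ and the $L^2$ pieces of $\|\cdot\|_\Spa$ stay bounded on $[0,1/b)$ (they may grow by integrable $\log$-powers, which appear when the singular phase is differentiated), while the top levels are only bounded with growth of the form $(1-bs)^{-\theta_j}$, $\theta_j$ increasing along the cascade --- harmless, since on $[0,1/b)$ this is finite and, after applying $\mathcal T_b$, the scaling factors $(1+bt)^{-1}$ absorb it. The hypothesis~\eqref{fTPA11} then propagates: from $||v(s,y)|^2-|\DIb(y)|^2|\le2\int_0^{1/b}\|\langle\cdot\rangle^{n}v\|_{L^\infty}\|\langle\cdot\rangle^{n}\Delta v\|_{L^\infty}\,d\sigma$ and the fact that for $b$ large this integral is smaller than $\inf_y\langle y\rangle^{2n}|\DIb(y)|^2$, one gets $\inf_y\langle y\rangle^{n}|v(s,y)|>0$ on $[0,1/b)$; this makes $|v|^{2/N}v$ a smooth function of $v$ along the flow, which closes the bootstrap. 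Transporting back, $u$ is global, belongs to $L^\infty((0,\infty)\times\R^N)\cap L^\infty((0,\infty),H^1(\R^N))\cap C([0,\infty),\Sig)$, and satisfies $\|u(t)\|_{L^\infty}\le C(1+t)^{-N/2}$.

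For (ii), write $v=|v|e^{i\theta}$. Since $\partial_s|v|^2=-2\,\Im(\bar v\,\Delta v)$ has a bounded right-hand side, $|v(s,y)|^2$ converges as $s\to1/b$ to some $\ell(y)^2$, with $||v(s,y)|^2-\ell(y)^2|=O(1/b-s)$ up to $\log$-powers, and $\ell$ inherits the weighted regularity and the bound $\inf_y\langle y\rangle^{n}\ell(y)>0$. The phase obeys $\theta_s=-\frac{\lambda}{1-bs}|v|^{2/N}+|v|^{-1}\Re(e^{-i\theta}\Delta v)$; inserting $|v|^{2/N}=\ell^{2/N}+O(1/b-s)$ isolates the divergent part $\frac{\lambda}{b}\ell(y)^{2/N}\log(1-bs)$ and leaves, after integration over $[0,s]$, a remainder converging to some $\theta_\infty(y)$ at a power rate $(1/b-s)^{\delta}$. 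Setting $w_0=\ell\,e^{i\theta_\infty}$ (so $|w_0|=\ell\not\equiv0$ and $\langle\cdot\rangle^{n}w_0\in L^\infty$), one obtains
\[ \bigl\|\langle\cdot\rangle^{n}\bigl(v(s,\cdot)-w_0\,e^{i\frac{\lambda}{b}|w_0|^{2/N}\log(1-bs)}\bigr)\bigr\|_{L^\infty}\le C\,(1/b-s)^{\delta}, \]
hence the same bound without the weight in $L^\infty$ and, since $n>N/2$, in $L^2$. Because $\log(1-bs)=-\log(1+bt)$ and $1/b-s=\frac1{b(1+bt)}$, applying the (linear, $L^2$-isometric) transformation $\mathcal T_b$ turns this into~\eqref{T1n2:2}, while $t^{N/2}\|u(t)\|_{L^\infty}=\bigl(\tfrac t{1+bt}\bigr)^{N/2}\|v(s)\|_{L^\infty}\to b^{-N/2}\|w_0\|_{L^\infty}$ gives~\eqref{T1n1}. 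Uniqueness in the stated class follows from uniqueness for the transformed equation.

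The main difficulty is step (i): closing the cascade of weighted Sobolev estimates on all of $[0,1/b)$ in spite of the singular coefficient and of the derivative loss caused by differentiating $|v|^{2/N}v$, while at the same time propagating the non-vanishing of $v$ --- this is exactly where the largeness of $b$ and the relations~\eqref{n2} among $k,m,n$ are used. Once uniform low-order bounds are available, step (ii) is essentially an ODE-in-$s$ analysis at each point $y$ with bounded perturbations.
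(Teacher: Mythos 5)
Your strategy coincides with the paper's in all essentials: the pseudo-conformal transformation to the nonautonomous equation $iv_s+\Delta v=\lambda(1-bs)^{-1}|v|^{2/N}v$ on $[0,1/b)$, the local theory in $\Spa$ from~\cite{CN} plus a bootstrap closed by a cascade of growth exponents and the largeness of $b$, the propagation of the lower bound $\inf_y\langle y\rangle^{n}|v(s,y)|>0$, a modulus/phase ODE analysis as $s\to 1/b$ producing $w_0$ and the logarithmic phase, and the transformation back. The one imprecision is that in the paper \emph{every} derivative norm of order $\ge 1$ (not only the top ones) is allotted a small polynomial growth $(1-bs)^{-\sigma_{|\beta|}}$, since a purely logarithmic bound at low order would not close the diagonal term $\int_0^s(1-b\sigma)^{-1}\|v\|_{L^\infty}^{\alpha}\|\langle\cdot\rangle^{n}D^{\beta}v\|_{L^\infty}\,d\sigma$; but this is exactly the cascade device you already invoke for the higher orders, so the argument is the same.
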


\begin{thm} \label{T2}
Let $\lambda \in \C$ with $\Im \lambda <0$.
Assume~\eqref{fCA1},   \eqref{n2}, \eqref{n11}, let $\Spa $ be defined
by~$\eqref{n27}$-$\eqref{n28}$, and ${\Sigma}$ by $\eqref{sigma}$. Suppose
$ \DI (x) =e^{i\frac{b|x|^2 }{4}}\DIb (x)$, where $b\in \R $ and $\DIb 
\in \Spa $ satisfies~\eqref{fTPA11}.
If  $b>0$ is sufficiently large, then there exists a unique, global solution $u\in
C([0,\infty), \Sigma ) \cap L^\infty ((0,\infty)\times\R^N ) \cap L^\infty ((0,\infty), H^1 (\R^N )  )$ of~\eqref{NLSI}. Moreover,
there exist $\delta >0$ and $f_{0}, w_0 \in L^{\infty},$ with $f_0$ real valued,  $ \|
f_{0}  \| _{L^{\infty}} \le \frac{1}{2}$, $w_0 \not \equiv 0$ and $ \langle \cdot \rangle ^n w_0\in L^\infty  (\R^N )  $ such that
\begin{equation}  \label{T1n4:2}
\|  u (t, \cdot ) - z(t, \cdot ) \|  _{L^2} +  (1+t)^{\frac {N} {2} } \|  u (t, \cdot ) - z (t, \cdot )  \|  _{L^{\infty}} \le C (1+t)^{-\delta }
\end{equation} 
where 
\begin{equation*} 
z (t, x)= (1+bt) ^{-\frac{N}{2} 
}e^{i \Theta (  t, \cdot )  }  \Psi \Bigl( t, \frac{ \cdot }{1+bt}\Bigr)  w_0 \Bigl(  \frac{ \cdot }{1+bt}\Bigr) 
\end{equation*} 
with
\begin{equation*} 
\Theta (  t,x ) =\frac{ b|x|^2 }{4(1+bt)} -  \frac
{\Re \lambda}{ \Im\lambda} \log  \Bigl(  \Psi \Bigl(  t, \frac {x} {1+bt}  \Bigr) \Bigr)
\end{equation*} 
and 
\begin{equation*} 
\Psi (  t, y )   =   \Bigl(   \frac{ 1 + f_0 (y ) }{1+ f_0 ( y )+ \frac {2 |\Im \lambda |} {Nb}  |
\DIb ( y )  | ^{\frac {2} {N}}  \log(1+bt)   }  \Bigr)^{\frac {N} {2 }} .
\end{equation*} 
In addition, 
\begin{equation} \label{T1n3}
 ( t \log t )^{ \frac {N} {2 }}  \| u ( t ) \| _{ L^\infty  } \goto  _{ t\to \infty  } ( \alpha  | \Im \lambda |)^{- \frac {N} {2}} .
\end{equation}
\end{thm}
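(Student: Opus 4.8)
The plan is to use the pseudo-conformal transformation to convert the large-time behavior of the global solution $u$ of~\eqref{NLSI} into the small-time behavior of an auxiliary solution, and then run the proof of Theorem~\ref{T2} in parallel with Theorem~\ref{T1}, tracking the dissipative term coming from $\Im\lambda<0$. Recall that under the pseudo-conformal transformation
\begin{equation*}
v(t,x)=(1+bt)^{-\frac N2}e^{i\frac{b|x|^2}{4(1+bt)}}u\Bigl(\frac{t}{1+bt},\frac{x}{1+bt}\Bigr),
\end{equation*}
a solution $u$ of $iu_t+\Delta u=\lambda|u|^{\frac2N}u$ on a time interval $(0,T)$ corresponds to a solution $v$ of an equation of the form
\begin{equation*}
iv_s+\Delta v=\frac{\lambda}{(1-bs)^{?}}|v|^{\frac2N}v
\end{equation*}
on the rescaled interval, and with the critical power $\alpha=\frac2N$ the time weight is precisely $(1-bs)^{-1}$, which is what produces the logarithm. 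First I would verify that, with the given initial value $\DI(x)=e^{i\frac{b|x|^2}4}\DIb(x)$, the function $v$ at time $s=0$ is exactly $\DIb$, and then invoke the local well-posedness / a priori estimate machinery in the space $\Spa$ (from~\cite{CN} and from the earlier sections of this paper, which provide the cascade-of-exponents estimates) to get a solution $v$ on a maximal interval $[0,S_{\max})$ with $v(s)\in\Spa$ and with controlled growth of the Sobolev norms. The hypothesis~\eqref{fTPA11} that $\langle x\rangle^n\DIb$ is bounded below guarantees $v(s)$ stays bounded away from zero (after possibly taking $b$ large so the solution exists long enough and the lower bound is not destroyed), so the nonlinearity $|v|^{\frac2N}v$ is smooth along the orbit and the ODE for the modulus and the phase can be integrated explicitly.

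Next I would derive the asymptotic profile. Writing $v=\rho\,e^{i\theta}$ (or, following the paper's apparent normalization, $v(s,y)=(1+f_0(y)+\cdots)^{?}\,\cdots$), the equation decouples at leading order into a transport-type equation whose characteristics are frozen as $b\to\infty$, so that $|v(s,y)|^{\frac2N}$ is, to leading order, $|\DIb(y)|^{\frac2N}$ times the explicit factor coming from integrating $\frac{d}{ds}|v|^2\sim\frac{2\Im\lambda}{1-bs}|v|^{2+\frac2N}$ — this is precisely the ODE $\dot r=\frac{2\Im\lambda}{1-bs}r^{1+\frac2N}$, whose solution is $r(s)=\bigl(r_0^{-2/N}+\frac{2|\Im\lambda|}{Nb}\log\frac1{1-bs}\bigr)^{-N/2}$ up to lower-order corrections, and changing back from $s$ to the original time via $s=\frac{t}{1+bt}$, so $\frac1{1-bs}=1+bt$, produces the stated $\Psi(t,y)$. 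The function $f_0$ absorbs the first-order-in-$1/b$ correction to the initial modulus (hence $\|f_0\|_{L^\infty}\le\frac12$ for $b$ large and $f_0$ real), and $w_0$ is the limiting profile of $v(s)$ itself as $s\to S_{\max}$ after factoring out the explicit decay. The phase $\Theta$ then comes from integrating $\dot\theta\sim-\frac{\Re\lambda}{1-bs}|v|^{\frac2N}$ together with the quadratic phase $\frac{b|x|^2}{4(1+bt)}$ reintroduced by the inverse pseudo-conformal transformation; using $\frac{d}{ds}\log\Psi=\frac{2|\Im\lambda|}{N}\frac{|\DIb|^{2/N}}{1-bs}(1+f_0+\cdots)^{-1}\cdot(\cdots)$ one recognizes that $\int|v|^{2/N}\frac{ds}{1-bs}$ equals a constant multiple of $-\log\Psi$, which is how $-\frac{\Re\lambda}{\Im\lambda}\log\Psi$ appears. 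The convergence rate $(1+t)^{-\delta}$ in~\eqref{T1n4:2} is inherited from the rate at which $v(s)-w_0$ (minus its explicit corrections) tends to $0$ in $L^2\cap L^\infty$ as $s\to S_{\max}$, translated through the transformation; the positivity of $\delta$ is where the quantitative smallness forced by large $b$ is used.

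Finally, for~\eqref{T1n3} I would simply read off $\|u(t)\|_{L^\infty}$ from $\|v(s)\|_{L^\infty}=(1+bt)^{N/2}\|u(t)\|_{L^\infty}$ together with the asymptotic formula for $\|v(s)\|_{L^\infty}$. As $s\to S_{\max}$, i.e. $t\to\infty$, the explicit factor $\Psi(t,y)^{N/2}$ forces $\|v(s)\|_{L^\infty}\sim\bigl(\frac{2|\Im\lambda|}{Nb}\log(1+bt)\bigr)^{-N/2}\cdot\bigl(\text{something}\to 1\bigr)$ — the $f_0$ and $\DIb$ dependence washing out in the supremum because the dominant term $\frac{2|\Im\lambda|}{Nb}|\DIb(y)|^{2/N}\log(1+bt)$ swamps $1+f_0(y)$ at the point(s) where $|\DIb|$ attains its (essential) supremum, and that supremum being finite and positive by $\DIb\in\Spa$ and~\eqref{fTPA11}. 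Hence $(1+bt)^{N/2}\|u(t)\|_{L^\infty}\sim\bigl(\frac{2|\Im\lambda|}{Nb}\log(1+bt)\bigr)^{-N/2}$, and since $\log(1+bt)\sim\log t$ and $(1+bt)^{N/2}\sim(bt)^{N/2}$, multiplying through gives $(t\log t)^{N/2}\|u(t)\|_{L^\infty}\to\bigl(\frac{2|\Im\lambda|}{N}\bigr)^{-N/2}=(\alpha|\Im\lambda|)^{-N/2}$ since $\alpha=\frac2N$. The main obstacle I expect is not this bookkeeping but the a priori estimates in $\Spa$: one must control the growth of high Sobolev norms of $v$ up to the endpoint $s=S_{\max}$ through the cascade of exponents, show $S_{\max}$ corresponds to $t=\infty$ (equivalently $bS_{\max}=1$, so that the transformed solution lives exactly as long as needed), and keep $v$ uniformly bounded below — all of which is delicate precisely because the time weight $(1-bs)^{-1}$ is non-integrable, and it is here that taking $b$ large is essential to close the estimates.
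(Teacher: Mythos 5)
Your proposal follows essentially the same route as the paper: pass via the pseudo-conformal change of variables to the nonautonomous equation with weight $(1-bs)^{-1}$ on $[0,\frac 1b)$, close the a priori estimates in $\Spa$ for $b$ large through the cascade of exponents (Propositions~\ref{p1}--\ref{p3}), integrate the modulus and phase ODEs to produce $\Psi$, $\Theta$, $f_0$ and $w_0$ (Proposition~\ref{p4}), and translate back, and your bookkeeping for~\eqref{T1n3} matches the paper's. The one conclusion your sketch does not address is $w_0\not\equiv 0$: since $\Im\lambda<0$ the $L^2$ norm is not conserved, and the paper must rule out $w_0\equiv 0$ by a separate Gronwall-type argument (if $w_0\equiv 0$ then~\eqref{n63} forces $\|v(t)\|_{L^2}\to 0$ as $t\uparrow \frac 1b$, while the dissipation identity combined with the resulting $L^\infty$ decay keeps $\|v(t)\|_{L^2}$ bounded below, a contradiction); apart from this, and some harmless slips (your displayed change of variables is not literally the inverse of~\eqref{fNLS1:0}, and your heuristic ODE for $r=|v|$ carries a stray factor of $2$ even though the solution you write down is the correct one), the argument is the paper's.
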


\begin{rem} \label{eRM1} 
Here are some comments on the above Theorems~\ref{T1} and~\ref{T2}.
\begin{enumerate}[{\rm (i)}] 

\item \label{eRM1:1} 
The results are valid in any space dimension $N\ge 1$.

\item \label{eRM1:2} 
We do not require the initial value $\DI $ to have small amplitude. Instead, we require $\DI $ to be sufficiently oscillatory (in the sense that $b$ is requested to be sufficiently large).
Note also that Theorems~\ref{T1} and~\ref{T2} do not yield any information on the behavior of the solution for $t<0$.

\item \label{eRM1:3} 
It is easy to verify that  $\Srn \subset \Spa$, and that if $\rho \ge n$, then $\langle x \rangle ^{-\rho } \in \Spa$. Therefore, if $\DIb = c  \langle \cdot \rangle ^{-n} + \varphi $ with $c\in \C$, $c\not = 0$, and $\varphi \in \Srn$, $ |\varphi | \le ( |c| -\varepsilon )  \langle \cdot \rangle ^{-n} $, $\varepsilon >0$, then $\DIb \in  \Spa$ and $\DIb$ satisfies~\eqref{fTPA11}. 

\item \label{eRM1:3:1} 
The exponent $\delta >0$ that we obtain in~\eqref{T1n2:2} and~\eqref{T1n4:2} is provided by Proposition~\ref{p4} below and equals $1- \sigma _J$, where $\sigma _J$ is given by~\eqref{fCA9}. In particular, it is independent of the solution. Moreover, it can be chosen as close to $1$ as we want.

\item \label{eRM1:4} 
Note that the limit in~\eqref{T1n3} is independent of the initial value $\DI$. 
This is due to the fact that  the limit in~\eqref{fp4:2} is independent of the initial value $\DIb$ in~\eqref{NLS2}. This last property can be understood by considering the ODE $ i z'= \lambda  (1-bt)^{-1}  |z|^ \alpha z$. One easily verifies that $ |z(t)|^{-\alpha } =  | z(0) |^{-\alpha } + \frac {\alpha  |\Im \lambda |} {b}  |\log (1-bt)|$, so that $\lim  _{ t\uparrow \frac {1} {b} }  |\log (1-bt)|^{\frac {1} {\alpha }}  |z(t)| = ( \frac {b} {\alpha  |\Im \lambda |})^{\frac {1} {\alpha }}$ is independent of $z(0)\not = 0$. 

\item \label{eRM1:5} 
One can express formula~\eqref{T1n2:2} in the form of the standard modified scattering. To see this, let the dilation operator $D_a$ and the multiplier $M_a$ be defined by $D_a \phi (x)= a^{-\frac {N} {2}} \varphi (a^{-1 } x)$ and $M_a (x)= e^{i \frac { |x|^2} {4a}}$, so that (see~\cite{HayashiO}) $e^{it \Delta } = i^{- \frac {N} {2}} M_t D_t {\mathcal F} M_t$, where ${\mathcal F}$ is the Fourier transform. Using the 
relations $D_a {\mathcal F}= {\mathcal F} D _{ \frac {1} {a} }$ and $M_a D_b = D_b M _{ \frac {a} {b^2} }$, one obtains
\begin{equation*} 
e^{- i t \Delta } M _{ \frac {1+bt} {b} } D_{ 1+ bt } =   M _{ \frac {1} { b } } e^{- i \frac {t}  { 1+bt } \Delta }. 
\end{equation*} 
Since $z$ in~\eqref{T1n2:2} can be written in the form
\begin{equation*} 
z (t)= e^{- i  \frac {\lambda} {b} | 
w_0 (  \frac{x}{1+bt} )  |  ^{\frac {2} {N}} \log ( 1+bt )} e^{ i t \Delta } e^{- i t \Delta } M _{ \frac {1+bt} {b} } D_{ 1+b t } w_0
\end{equation*} 
we deduce that
\begin{equation*} 
e^{ - i t \Delta } [ e^{ i  \frac {\lambda} {b} | 
w_0 (  \frac{x}{1+bt} )  |  ^{\frac {2} {N}} \log ( 1+bt )}  z (t) ] \goto _{ t\to \infty  }   M _{ \frac {1} { b } } e^{- i \frac {1}  { b } \Delta } w_0 =: u^+
\end{equation*} 
in $L^2 (\R^N ) $. Therefore, \eqref{T1n2:2} takes the form of modified scattering. In other words, $u(t)$ behaves like $e^{ i  \frac {\lambda} {b} | 
w_0 (  \frac{x}{1+bt} )  |  ^{\frac {2} {N}} \log ( 1+bt )}  e^{t \Delta } u^+$, i.e. a free solution modulated by a phase.

\end{enumerate} 
\end{rem}

\begin{rem}  \label{eRM2} 
Here are some open questions related to Theorems~\ref{T1} and~\ref{T2}.
\begin{enumerate}[{\rm (i)}] 

\item  \label{eRM2:1}
We do not know what happens if $\Im \lambda >0$. 
Let us observe that if $\alpha <\frac {2} {N}$ and  $\Im \lambda >0$, then it follows from~\cite[Theorem~1.1]{CCDW} that every nontrivial solution of~\eqref{NLS1} either blows up in finite time or else is global with unbounded $H^1$ norm. The proof in~\cite{CCDW} apparently does not apply to the case $\alpha =\frac {2} {N}$. 
See also Remark~\ref{eRem4} below.

\item  \label{eRM2:2}
For equation~\eqref{NLS1} with $\Im \lambda >0$, it seems that no finite time blowup result is available (for any dimension $N$ and any $\alpha >0$). Note that for the same equation set on a bounded domain $\Omega $ with Dirichlet boundary conditions, there is no global solution for any $\alpha >0$. See~\cite[Section~2]{CCDW}.

\item  \label{eRM2:3}
If $\alpha < \frac {2} {N}$ and $\Im \lambda \le 0$, it seems that no precise description of the asymptotic behavior of the solutions of~\eqref{NLS1} is available. When $\lambda \in \R$, $\lambda >0$, it is proved in~\cite{Visciglia} that all $H^1$ solutions converge strongly to $0$ in $L^p (\R^N ) $, for $2< p < \frac {2N} {N-2}$, but even the rate of decay of these norms seems to be unknown. 

\end{enumerate} 
\end{rem} 

For proving Theorems~\ref{T1} and~\ref{T2}, we use the strategy of~\cite{CN}. One main ingredient is the introduction of the space $\Spa$, which is motivated by the observation that one major difficulty in studying equation~\eqref{NLS1}-\eqref{fCA1} is the lack of regularity of the nonlinearity $ |u|^\frac {2} {N} u$ (except in dimension $N=1$). However, this lack of regularity is only at $u= 0$, so it is not apparent to solutions that do not vanish. The various conditions in the definition of $\Spa$ are here to ensure a control from below of $ |u|$, provided the initial value in $\Spa$ satisfies~\eqref{fTPA11}. See~\cite[Section~1]{CN}. The other main ingredient is the application of the pseudo-conformal transformation. More precisely, given any $b>0$,  $u\in C([ 0, \infty ), \Sigma ) \cap L^\infty ((0,\infty ) \times \R^N )$ is a solution of~\eqref{NLS1} (and its equivalent formulation~\eqref{NLSI})  if and only if $v \in C([ 0, \frac {1} {b} ), \Sigma ) \cap L^\infty ((0,\frac {1} {b} ) \times \R^N )$ defined by
\begin{equation} \label{fNLS1:0}
u(t,x)=(1+bt)^{-\frac{N}{2}} e^{i \frac {b|x|^ 2 } {4(1+bt)}}v \Bigl(  \frac
{t} {1+bt} ,\frac{x} {1+bt} \Bigr) , \quad t\ge 0, \, x\in \R^N 
\end{equation}
is a solution of the nonautonomous Schr\"o\-din\-ger equation
\begin{equation}  \label{NLS2}
\begin{cases} 
iv_t + \Delta v = \lambda (1-bt)^{-1} |v|^\alpha v \\ 
v(0)= \DIb 
\end{cases} 
\end{equation} 
and its equivalent formulation
\begin{equation} \label{NLS3}
v(t)= e^{i t\Delta }\DIb - i \lambda \int _0^t (1- bs)^{-1} e^{i (t - s) \Delta } |v(s)|^\alpha v(s)\, ds
\end{equation} 
where $\DIb (x) = \DI (x)  e^{ - i \frac {b|x|^ 2 } {4}}$.
In~\cite[Theorem~1.3]{CN}, a scattering result is established for solutions of~\eqref{NLS1} with $\alpha >\frac {2} {N}$. In this case, \eqref{fNLS1:0} transforms solutions of~\eqref{NLS1} to solutions of a nonautonomous equation similar to~\eqref{NLS2}, but with $(1-bt)^{-1}$ replaced by $(1-bt)^{-\frac {4- N\alpha } {2}}$. Since $\int _0^{\frac {1} {b}} (1-bt)^{-\frac {4- N\alpha } {2}} dt = \frac {2} {b (N\alpha -2)} \to 0$ as $b\to \infty $, a solution $v$ can be constructed on the interval $[0, \frac {1} {b}) $ by a fixed point argument, provided $b$ sufficiently large. In the present case~\eqref{fCA1}, this argument cannot be applied since $(1-bt)^{-1}$  is not integrable at $\frac {1} {b}$. 
We therefore have to modify the arguments in~\cite{CN}. 
Crucial in our analysis is the elementary estimate
\begin{equation} \label{fCA19} 
\int _0^t (1-bs) ^{-1- \mu } ds = \frac {1} {b\mu } [ (1 - bt)^{ -\mu } -1]\le \frac {1} {b\mu }  (1 - bt)^{ -\mu }
\end{equation} 
for every $\mu >0$ and $t<\frac {1} {b}$. It follows that if a certain norm of $e^{i (t - s) \Delta } |v(s)|^\alpha v(s)$ is estimated by $ (1 - bs)^{ -\mu } $, then the integral in~\eqref{NLS3} is estimated in that norm by the same power $ (1 - bt)^{ -\mu } $. Concretely, this means that we can control a certain growth of $v(t)$ as $t\to \frac {1} {b}$. Technically, this is achieved by introducing an appropriate cascade of exponents. See Section~\ref{sTEQ}, and in particular Remark~\ref{eRem3}.

The rest of this paper is organized as follows. In Sections~\ref{sLIN} and~\ref{sNLE}, we establish estimates of $e^{t \Delta }$ and $ |u|^\alpha u$, which are refined versions of estimates in~\cite{CN}. 
In Section~\ref{sTEQ}, we study equation~\eqref{NLS2}. We first obtain a local existence result with a blowup alternative. Then we show that if $b$ is sufficiently large, the solution of~\eqref{NLS2} exists on $[0, \frac {1} {b} )$ and satisfies certain estimates as $t\uparrow \frac {1} {b}$. (Proposition~\ref{p3}.) This is the crux of the paper,  which requires the estimates of Sections~\ref{sLIN} and~\ref{sNLE}, as well as the introduction of an appropriate cascade of exponents.
The asymptotics of the corresponding solutions of~\eqref{NLS2} as $t\uparrow \frac {1} {b}$ is determined in Section~\ref{sASY}. Finally, the proof of Theorems~\ref{T1} and~\ref{T2} is completed in Section~\ref{sFIN}, by translating the results of  Section~\ref{sASY} in the original variables via the transformation~\eqref{fNLS1:0}.

\section{An estimate for the linear Schr\"{o}dinger equation} \label{sLIN} 

In this section,  we assume~\eqref{n2}-\eqref{n11}  (where $\alpha >0$ is arbitrary, not necessarily given by~\eqref{fCA1}), and we let $\Spa $ be defined
by~\eqref{n27}-\eqref{n28}. 
We establish estimates for the solution of the linear, nonhomogeneous  Schr\"{o}dinger equation. We recall that (see~\cite[Proposition~1]{CN}) 
\begin{equation}  \label{fCA7b1} 
 \text{$(e^{it \Delta }) _{ t\in \R }$ is a $C_0$ group on $\Spa $} 
\end{equation} 
and that there exists a constant $\CTd$  such that
\begin{equation} \label{fCA7} 
 \| e^{it \Delta } v \|_\Spa \le \CTd  \| v \|_\Spa
\end{equation} 
and
\begin{equation} \label{eLE3:12}
 \| \langle \cdot \rangle ^{\Imqd}   (e^{it \Delta }  \psi -\psi ) \| _{ L^\infty  } \le t \CTd  \| \psi  \|_\Spa 
\end{equation} 
for all $ 0\le t \le 1$ and $v\in \Spa$. 

\begin{prop} \label{eP1} 
There exists $\CTc \ge 1$ such that if $T>0$, $\DIb \in \Spa $ and $f \in C ([0, T], \Spa) $, then the solution $v$ of
\begin{equation} \label{l1}
\begin{cases} 
iv_t + \Delta v=f \\
v(0)= \DIb 
\end{cases} 
\end{equation} 
satisfies  for all $0\le t\le T$ the following estimates.
\begin{equation}  \label{n29}
 \| \langle \cdot \rangle ^\Imqd D^\beta v(t) \| _{ L^\infty  } \le  \| \DIb \| _{ \Spa } + \CTc \int _0^t (  \| v(s) \|_\Spa +  \| \langle \cdot \rangle ^\Imqd D^\beta f(s) \| _{ L^\infty  }  )\, ds
\end{equation} 
if $\left|  \beta\right|  \le 2m$,
\begin{equation}  \label{n30}
 \| \langle \cdot \rangle ^{\Imqd -\Imqs } D^\beta v(t) \| _{ L^2  } \le  \| \DIb \| _{ \Spa } + \CTc  \int _0^t (  \| v(s) \|_\Spa +  \| \langle \cdot \rangle ^{\Imqd -\Imqs } D^\beta f(s) \| _{ L^2  }  )\, ds
\end{equation} 
if $|\beta  |=\Imqp + \Imqs  +  2 \Imqu  +1 $ with $0\le \Imqp \le \Imqt +1$ and $0\le \Imqs \le \Imqd$.
\end{prop}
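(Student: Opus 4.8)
The plan is to write the solution of~\eqref{l1} via Duhamel's formula,
\begin{equation*}
v(t) = e^{it\Delta}\DIb - i\int_0^t e^{i(t-s)\Delta} f(s)\, ds,
\end{equation*}
and then estimate each of the two summands in the norms appearing on the left-hand sides of~\eqref{n29} and~\eqref{n30}. The first term is handled by the known mapping properties of the free group on $\Spa$: indeed, by~\eqref{fCA7} we have $\|e^{it\Delta}\DIb\|_\Spa \le \CTd\|\DIb\|_\Spa$, and since each of the norms $\|\langle\cdot\rangle^\Imqd D^\beta\cdot\|_{L^\infty}$ (for $|\beta|\le 2m$) and $\|\langle\cdot\rangle^{\Imqd-\Imqs}D^\beta\cdot\|_{L^2}$ (for $|\beta|=\Imqp+\Imqs+2\Imqu+1$) is, by~\eqref{n28}, dominated by $\|\cdot\|_\Spa$, the first term contributes at most $\CTd\|\DIb\|_\Spa$ to each estimate. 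To match the stated inequalities, which have a clean $\|\DIb\|_\Spa$ (constant $1$) in front, I will split $e^{it\Delta}\DIb = \DIb + (e^{it\Delta}\DIb - \DIb)$: the term $\DIb$ already satisfies $\|\langle\cdot\rangle^\Imqd D^\beta\DIb\|_{L^\infty}\le\|\DIb\|_\Spa$ and similarly in $L^2$, while the difference $e^{it\Delta}\DIb - \DIb = \int_0^t \partial_s(e^{is\Delta}\DIb)\,ds = i\int_0^t e^{is\Delta}\Delta\DIb\,ds$ will be absorbed into the integral term (see below), or, for the $L^\infty$ estimate~\eqref{n29} with $|\beta|=0$, handled directly by~\eqref{eLE3:12}; for $|\beta|>0$ one writes $D^\beta(e^{is\Delta}\DIb - \DIb)$ and uses that $D^\beta$ commutes with $e^{is\Delta}$ together with~\eqref{fCA7}.

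The heart of the matter is the Duhamel integral term. Writing $w(t) = \int_0^t e^{i(t-s)\Delta} f(s)\,ds$, I want to bound $\|\langle\cdot\rangle^\Imqd D^\beta w(t)\|_{L^\infty}$ and $\|\langle\cdot\rangle^{\Imqd-\Imqs}D^\beta w(t)\|_{L^2}$ by $\CTc\int_0^t(\|v(s)\|_\Spa + \text{(the corresponding norm of }f(s)))\,ds$. The key algebraic step is to commute $D^\beta$ past $e^{i(t-s)\Delta}$ (they commute, $D$ being a constant-coefficient operator) so that the integrand becomes $e^{i(t-s)\Delta}D^\beta f(s)$, and then to write, for each fixed $s$,
\begin{equation*}
e^{i(t-s)\Delta}D^\beta f(s) = D^\beta f(s) + \bigl(e^{i(t-s)\Delta} - 1\bigr)D^\beta f(s).
\end{equation*}
The first piece contributes $\int_0^t\|\langle\cdot\rangle^\Imqd D^\beta f(s)\|_{L^\infty}\,ds$ (resp. the $L^2$ version), which is exactly what appears in~\eqref{n29}–\eqref{n30}. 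For the second piece I will invoke~\eqref{eLE3:12} applied to $\psi = D^\beta f(s)\in\Spa$ (which requires $0\le t-s\le 1$, hence the role of working on a fixed interval and, if necessary, subdividing $[0,T]$ into unit-length steps and iterating) to get $\|\langle\cdot\rangle^\Imqd(e^{i(t-s)\Delta}-1)D^\beta f(s)\|_{L^\infty}\le (t-s)\CTd\|D^\beta f(s)\|_\Spa$. The remaining obstacle is that $\|D^\beta f(s)\|_\Spa$ is \emph{not} controlled by the norms of $f(s)$ that appear on the right-hand side of the Proposition; this is where I expect the real work to be, and the resolution should parallel the argument in~\cite{CN}: rather than estimating $\|D^\beta f\|_\Spa$, one estimates the weighted derivative norms of $w$ directly by exploiting that $\langle x\rangle^\Imqd$ is itself (modulo harmless lower-order corrections from differentiating the weight, which are absorbed using the conditions~\eqref{n2} relating $\Imqd,\Imqt,\Imqu$) well-behaved under $e^{it\Delta}$, together with the $L^\infty$–$L^\infty$ and $L^2$–$L^2$ bounds on $e^{it\Delta}$ restricted to $\Spa$ that underlie~\eqref{fCA7}.

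Concretely, I anticipate the following ordering of steps. (1) Establish the Duhamel formula and reduce to estimating $w(t)$. (2) For the $L^2$-type estimate~\eqref{n30}, use that $e^{i(t-s)\Delta}$ is unitary on $L^2$, and more precisely that it maps the weighted space $\langle x\rangle^{\Imqd-\Imqs}L^2$ into itself with norm growth at most linear in $t-s$ on $[0,1]$ — a statement which follows from the same commutator computation $[\langle x\rangle^{\Imqd-\Imqs}, e^{i(t-s)\Delta}]$ and the embedding of $\Spa$ controlling the requisite derivatives; this gives~\eqref{n30} with the integrand $\|v(s)\|_\Spa$ absorbing the commutator contributions and $\|\langle\cdot\rangle^{\Imqd-\Imqs}D^\beta f(s)\|_{L^2}$ coming from the $D^\beta f(s)$ piece. (3) For~\eqref{n29}, combine the $L^2$-based control of high derivatives with a Sobolev embedding $H^\Imqt\hookrightarrow L^\infty$ (legitimate since $\Imqt>N/2$ by~\eqref{n2}) to pass from $L^2$ to $L^\infty$, losing a fixed number of derivatives that is budgeted for by the definition~\eqref{n11} of $J$ and by the inequality $2\Imqu\ge\Imqt+\Imqd+1$ in~\eqref{n2}. (4) Finally, iterate over unit subintervals of $[0,T]$ so that the restriction $t-s\le 1$ in~\eqref{eLE3:12} is never violated, noting that the constant $\CTc$ can be taken independent of $T$ because each step only costs a factor controlled by $\CTd$ and the number of steps is absorbed into the integral via $\int_0^t$. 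The single step I expect to be most delicate is controlling the commutator of the polynomial weight $\langle x\rangle^\Imqd$ with $e^{i(t-s)\Delta}$ uniformly, i.e. showing it costs only a fixed number of derivatives and a factor $O(t-s)$ on $[0,1]$; this is precisely the content behind the estimates of~\cite{CN} that we are refining, and the hypotheses~\eqref{n2} are tailored so that the derivative loss stays within the budget encoded in $\Spa$.
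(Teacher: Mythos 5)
Your Duhamel-based strategy is genuinely different from the paper's proof, which is a direct energy argument on the differentiated equation, and as written it has gaps that your sketch does not close. The structural issue is this: the right-hand sides of \eqref{n29}--\eqref{n30} contain $\int_0^t\|v(s)\|_\Spa\,ds$, a norm of the \emph{solution}, with the data entering only through $\|\DIb\|_\Spa$ with coefficient $1$, uniformly in $T$. In Duhamel's formula $v$ never appears on the right, so this term must be produced by keeping $\Delta v$ on the right-hand side of the equation --- which is what the paper does: apply $\langle\cdot\rangle^{n}D^{\beta}$ to \eqref{l1}, use $\partial_t|w|\le |w_t|$ to get $\partial_t|\langle\cdot\rangle^{n}D^{\beta}v|\le|\langle\cdot\rangle^{n}D^{\beta}\Delta v|+|\langle\cdot\rangle^{n}D^{\beta}f|$, integrate in time, and bound $\|\langle\cdot\rangle^{n}D^{\beta}\Delta v\|_{L^\infty}\le C\|v\|_\Spa$, which is legitimate because $|\beta|+2\le 2m+2$ and Sobolev embedding together with the structure of $\Spa$ give $\|\langle\cdot\rangle^{n}D^{\gamma}u\|_{L^\infty}\le C\|u\|_\Spa$ for $|\gamma|\le 2m+2$. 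Your commutator terms, by contrast, involve $e^{i(t-s)\Delta}f(s)$ and $e^{is\Delta}\DIb$, not $v(s)$, and you give no mechanism for converting them into $\|v(s)\|_\Spa$; you yourself flag that $\|D^{\beta}f(s)\|_\Spa$ is not controlled by the norms on the right-hand side, and that is precisely where the argument breaks. Moreover \eqref{fCA7} and \eqref{eLE3:12} are only stated for $0\le t\le 1$, and iterating over unit subintervals makes the coefficient of $\|\DIb\|_\Spa$ grow like $\CTd^{\lceil T\rceil}$, which cannot be absorbed into any integral.

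The second, more serious gap concerns \eqref{n30}: there $|\beta|$ runs up to $J$, so $D^{\beta}\Delta v$ involves $|\beta|+2>J$ derivatives of $v$, beyond the regularity encoded in $\Spa$, and no crude bound of the Laplacian term (nor of $D^{\beta}f$ in $\Spa$) can work. The missing idea is an integration by parts exploiting the antisymmetry of $i\Delta$: multiply the differentiated equation by $\langle x\rangle^{2(n-\mu)}D^{\beta}\overline{v}$ and take the imaginary part; the Laplacian contributes $\Im\int\langle x\rangle^{2(n-\mu)}\Delta D^{\beta}v\,D^{\beta}\overline{v}$, whose symmetric part vanishes, leaving only $\Im\int D^{\beta}\overline{v}\,\nabla D^{\beta}v\cdot\nabla(\langle x\rangle^{2(n-\mu)})$. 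This term gains one derivative but loses one power of the weight, since $|\nabla\langle x\rangle^{2(n-\mu)}|\le C\langle x\rangle^{2(n-\mu)-1}$ (and it vanishes outright when $\mu=n$); the norm \eqref{n28} is built precisely so that $\|\langle\cdot\rangle^{n-\mu-1}\nabla D^{\beta}v\|_{L^2}\le\|v\|_\Spa$ for the relevant $\beta,\mu$. Your plan of commuting the weight with $e^{i(t-s)\Delta}$ does not exploit this cancellation and, taken at face value, would require controlling derivatives of order exceeding $J$.
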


\begin{proof}
It follows from~\eqref{fCA7b1} that $v\in C([0, T], \Spa )$. 
We first observe that if $ |\beta |\le 2\Imqu+2$, then
\begin{equation} \label{fSC1} 
 \| \langle \cdot \rangle ^\Imqd D^\beta u \| _{ L^\infty  } \le  C \| u \| _{ \Spa } 
\end{equation} 
for all $u\in \Spa$. Indeed, if $ |\beta | \le 2\Imqu$, then~\eqref{fSC1} follows immediately from~\eqref{n28}.  
Moreover, if $2\Imqu+1 \le  |\beta | \le 2\Imqu+2$, then by Sobolev's inequality $ \| \langle \cdot \rangle ^\Imqd D^\beta u \| _{ L^\infty  } \le  C  \| \langle \cdot \rangle ^\Imqd D^\beta u \| _{ H^k } $ since $k>\frac {N} {2}$ by~\eqref{n2}. 
Applying~\cite[formula~(2.13)]{CN} (with $s=0$), we deduce that $ \| \langle \cdot \rangle ^\Imqd D^\beta u \| _{ L^\infty  } \le  C  \|  u \| _\Spa $, and~\eqref{fSC1} follows.

We now prove~\eqref{n29}. Let $\left|  \beta\right|  \le 2 \Imqu$.
Applying  $\left\langle \cdot\right\rangle ^{n}D^{\beta}$ to
equation~\eqref{l1} we obtain
\begin{equation}\label{n33}
i (  \langle \cdot \rangle ^{n}D^{\beta}v )  _{t}
=- \langle \cdot \rangle ^{n}D^{\beta}\Delta v+ \langle
\cdot \rangle ^{n}D^{\beta}f
\end{equation}
so that
\begin{equation*} 
|  \langle \cdot \rangle ^{n}D^{\beta}v |  _{t}
\le   |   \langle \cdot \rangle ^{n}D^{\beta}\Delta
v |  + |   \langle \cdot \rangle
^{n}D^{\beta}f |  .
\end{equation*} 
Integrating this last equation on $(0,t)$ with $0<t\le T$, we deduce that
\begin{equation*}
\begin{split} 
 \| \langle \cdot \rangle ^n D^\beta v(t) \| _{ L^\infty  } \le &  \| \langle \cdot \rangle ^n D^\beta \DIb \| _{ L^\infty  } \\ & +  \int _0^t (   \| \langle \cdot \rangle ^n D^\beta \Delta v(s) \| _{ L^\infty  }  +  \| \langle \cdot \rangle ^n D^\beta f(s) \| _{ L^\infty  }  )\, ds .
\end{split} 
\end{equation*} 
Inequality~\eqref{n29} follows, by using~\eqref{fSC1}. 

Next we prove \eqref{n30}. Multiplying~\eqref{n33} by $\langle \cdot \rangle ^{n - 2 \mu } D^\beta  \overline{v} $ we obtain
\begin{equation} \label{fSC2} 
\begin{split} 
\frac {1} {2}\frac {d} {dt} & \| \langle \cdot \rangle ^{n -\mu }  D^\beta v \| _{ L^2 }^2 \\ &\le  \Bigl| \Im \int  _{ \R^N  } \langle x \rangle ^{2n -2\mu }  \Delta D^\beta v  D^\beta  \overline{v}  \Bigr| +  \| \langle \cdot \rangle ^{n -\mu } D^\beta v \| _{ L^2 }  \| \langle \cdot \rangle ^{n -\mu } D^\beta f \| _{ L^2 }  \\ &=  \Bigl| \Im \int  _{ \R^N  }   D^\beta  \overline{v}   \nabla  D^\beta v \cdot  \nabla (\langle x \rangle ^{2n -2\mu } ) \Bigr| +  \| \langle \cdot \rangle ^{n -\mu } D^\beta v \| _{ L^2 }  \| \langle \cdot \rangle ^{n -\mu } D^\beta f \| _{ L^2 } 
\end{split} 
\end{equation} 
after integration by parts.
If $\mu =n$, then
\begin{equation} \label{fSC4} 
\Bigl| \Im \int  _{ \R^N  }   D^\beta  \overline{v}   \nabla  D^\beta v \cdot  \nabla (\langle x \rangle ^{2n -2\mu } ) \Bigr| =0 .
\end{equation} 
If $\mu <n$, then using the estimate $ |\nabla \langle x\rangle ^{2n-2\mu }|\le C \langle x\rangle ^{2n-2\mu -1}$ (see~\cite[formula~(A.1)]{CN}), we see that
\begin{equation} \label{fSC3} 
\Bigl| \Im \int  _{ \R^N  }   D^\beta  \overline{v}   \nabla  D^\beta v \cdot  \nabla (\langle x \rangle ^{2n -2\mu } ) \Bigr| \le C  \|  \langle \cdot \rangle ^{n-\mu -1} \nabla D^\beta v \| _{ L^2 }  \|  \langle \cdot  \rangle ^{n -\mu }  D^\beta v \| _{ L^2 }.
\end{equation} 
Since $\|  \langle \cdot  \rangle ^{n - \mu -1} \nabla D^\beta v \| _{ L^2 } \le  \| v \|_\Spa$, estimate~\eqref{n30} easily follows from~\eqref{fSC2}, \eqref{fSC4} and~\eqref{fSC3}. 
\end{proof}

\section{A nonlinear estimate} \label{sNLE} 

Throughout this section, we consider $\alpha >0$ (not necessarily given by~\eqref{fCA1}),
 we assume~\eqref{n2}-\eqref{n11}, and we let $\Spa $ be defined
by~\eqref{n27}-\eqref{n28}. 
It is proved in~\cite[Proposition~2]{CN} that there exists a constant $\CTu$ such that if $u \in \Spa$ and $\eta >0$ satisfy
\begin{equation} \label{n1}
\eta \inf_{x\in\R^N } (\langle x\rangle^{{n}}|u(x)|) \ge 1 
\end{equation}
then $  |u| ^\alpha u\in \Spa $ and 
\begin{equation} \label{n1b} 
 \| \,  |u|^\alpha u \|_\Spa \le \CTu (1 + \eta  \| u \|_\Spa )^{2J}  \| u \|_\Spa^{\alpha +1} .
\end{equation} 
Moreover, if both $u_1, u_2\in \Spa$ satisfy~\eqref{n1}, then 
\begin{equation} \label{n1c} 
\begin{split} 
 \| \,  |u_1 &|^\alpha u_1 -  |u_2|^\alpha u_2 \|_\Spa \\ & \le \CTu (1 + \eta  (\| u_1 \|_\Spa  + \| u_2 \|_\Spa ))^{2J +1}  (\| u_1 \|_\Spa  + \| u_2 \|_\Spa )^\alpha \| u_1 - u_2 \|_\Spa .
\end{split} 
\end{equation}  
We now establish a refined version of~\eqref{n1b}. 
The refinement is based on the fact that expanding $D^{\beta}(|u|^{\alpha}u)$, one obtains on the one hand a term that contains derivatives of $u$ of order $ |\beta |$ and can be estimated by $C  |u|^\alpha  |D^\beta u|$ (see~\eqref{n6}); and on the other hand terms that contain products of derivatives of $u$, all of them being of order at most $ |\beta |-1$ (see~\eqref{n8}).  
The refined version of~\eqref{n1b} is essential in our proof of Proposition~\ref{p3} below. 
(See Remark~\ref{eRem3}.) 
Given $\ell \in \N$, we set
\begin{equation} \label{fCA2} 
\|  u\| _{1, \ell} = \sup   _{ 0\le  |\beta | \le \ell  } \| \langle \cdot \rangle^{n}D^{\beta}u\| _{L^{\infty}}
\end{equation} 
\begin{equation} \label{fCA3}
\|  u\| _{2, \ell} =
\begin{cases} 
\displaystyle  \sup_{ 2{m}  +1\le  |\beta | \le \ell } \| \langle \cdot \rangle^{n}D^{\beta }u\| _{L^2  } &  \ell \ge 2m+1 \\
0 & \ell \le 2m 
\end{cases} 
\end{equation} 
and
\begin{equation} \label{fCA4} 
\|  u\| _{3, \ell } = 
\begin{cases} 
\displaystyle  \sup_{ 2{m}+3+{k} \le  |\beta | \le \ell }\| \langle \cdot \rangle^{{J}-\ell }D^{\beta }u\| _{L^ 2 } & \ell \ge 2m +3 + k \\ 0 & \ell \le 2m + 2 + k 
\end{cases} 
\end{equation} 
and we have the following estimates. 

\begin{prop} \label{p2} 
There exists a constant $\CTs \ge 1$ such that if  $u\in \Spa $ and $\eta>0$ satisfy~\eqref{n1}, then \begin{equation} \label{n3b1}
\|  \langle \cdot \rangle ^n  D^{\beta}  (|u|^{\alpha
}u) \|  _{L^{\infty}}  \le  \CTs \| u \| _{ L^\infty  }^\alpha  \| \langle \cdot\rangle ^{n}D^{\beta}u\| _{L^\infty} 
\end{equation} 
for $0 \le  |  \beta |  \le 1 $,
\begin{equation} \label{n3}
\begin{split} 
\|  \langle \cdot \rangle ^n  D^{\beta}  (|u|^{\alpha
}u) \|  _{L^{\infty}}  \le & \CTs \| u \| _{ L^\infty  }^\alpha  \| \langle \cdot\rangle ^{n}D^{\beta}u\| _{L^\infty} \\ &    + \CTs  \| u \| _{ L^\infty  }^\alpha  (1+\eta \|  u\| _{1, | \beta |  -1})^{2|  \beta|  }\|  u\|  _{1,  |  \beta|  -1} 
\end{split} 
\end{equation} 
for $2 \le  |  \beta |  \leq2{m}$,
\begin{equation} \label{n4}
\begin{split} 
\|  \langle \cdot \rangle ^{n}D^{\beta}  (|u|^{\alpha
}u) \|  _{L^ 2 } \le & \CTs \|  u \|  _{L^{\infty} }^\alpha  \|   \langle \cdot \rangle ^{n}D^{\beta}u \| 
_{L^ 2 } \\ &
+\CTs (  1+\eta\|  u\| _{1, 2m  } )  ^{2J+\alpha} (  \|  u\| _{1, {2m}}+  \| u \| _{ 2,  |\beta | -1}  ),
\end{split} 
\end{equation} 
for $2{m}+1\leq|\beta|\leq2{m}+2+{k}$, 
and
\begin{equation} \label{n5}
\begin{split} 
 \|  \langle x\rangle^{{J}-|\beta|} & D^{\beta} (|u|^{\alpha}u) \| _{L^ 2 }  \le \CTs \|  u \|  _{L^{\infty}}^{\alpha
} \|  \langle x\rangle^{{J}-|\beta|}D^{\beta}u \|  _{L^ 2 } \\
& +\CTs (  1+\eta\|  u\| _{1, {2m}} )  ^{2J+\alpha} (
\|  u\| _{1, {2m}}+\|  u\| _{2, {2{m}+2+{k}} }+ \| u \| _{ 3,  |\beta | -1} )  
\end{split} 
\end{equation} 
for $2{m}+3+{k} \leq | \beta | \leq{J}$.
\end{prop}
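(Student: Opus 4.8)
The plan is to expand $D^\beta(|u|^\alpha u)$ by the higher‑order chain rule and to exploit the positive homogeneity of $z\mapsto|z|^\alpha z$. Set $h(z)=|z|^\alpha z$, so that $\partial_zh(z)=(1+\tfrac\alpha2)|z|^\alpha$ and $\partial_{\bar z}h(z)=\tfrac\alpha2|z|^{\alpha-2}z^2$; since $h$ is smooth away from $0$ and positively homogeneous of degree $\alpha+1$, any mixed derivative $\partial_z^a\partial_{\bar z}^bh$ of total order $p=a+b$ is positively homogeneous of degree $\alpha+1-p$, which gives the pointwise bounds $|\partial_zh(z)|+|\partial_{\bar z}h(z)|\le C|z|^\alpha$ and $|\partial_z^a\partial_{\bar z}^bh(z)|\le C_p|z|^{\alpha+1-p}$. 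The Fa\`a di Bruno formula (as in~\cite{CN}) then writes $D^\beta h(u)$ as the principal term $\partial_zh(u)D^\beta u+\partial_{\bar z}h(u)D^\beta\bar u$ of~\eqref{n6}, plus a fixed finite number of lower‑order terms of~\eqref{n8}, each of the form $\partial_z^a\partial_{\bar z}^bh(u)\prod_{i=1}^pD^{\gamma_i}(u\text{ or }\bar u)$ with $p=a+b\ge2$, $\gamma_1+\cdots+\gamma_p=\beta$, $|\gamma_i|\ge1$, hence every $|\gamma_i|\le|\beta|-1$. I would also record at the outset the two‑sided control $\eta^{-1}\langle x\rangle^{-n}\le|u(x)|\le\|u\|_{1,0}\langle x\rangle^{-n}$ furnished by~\eqref{n1} and~\eqref{fCA2} (in particular $\eta\|u\|_{1,2m}\ge\eta\|u\|_{1,0}\ge1$), and the arithmetic remark that~\eqref{n2}--\eqref{n11} force $J=2m+2+k+n<4m+2$, so no two of the $\gamma_i$ can both have order $\ge2m+1$; therefore in each lower‑order term at least $p-1$ of the factors satisfy $|D^{\gamma_i}u|\le\|u\|_{1,2m}\langle\cdot\rangle^{-n}$.

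The principal term is immediate: denoting by $w$ the weight of the norm at hand ($w=n$ in~\eqref{n3b1}, \eqref{n3}, \eqref{n4} and $w=J-|\beta|$ in~\eqref{n5}), it is bounded pointwise by $C|u|^\alpha|\langle x\rangle^wD^\beta u|\le C\|u\|_{L^\infty}^\alpha|\langle x\rangle^wD^\beta u|$ (using $|D^\beta\bar u|=|D^\beta u|$), and taking the $L^\infty$, resp.\ $L^2$, norm produces the first term on the right of each of~\eqref{n3b1}--\eqref{n5}. As there are no lower‑order terms when $|\beta|\le1$, this already establishes~\eqref{n3b1}. For the lower‑order terms when $2\le|\beta|\le2m$ every $|\gamma_i|\le|\beta|-1\le2m$, so $|D^{\gamma_i}u|\le\|u\|_{1,|\beta|-1}\langle\cdot\rangle^{-n}$, while $|\partial_z^a\partial_{\bar z}^bh(u)|\le C|u|^\alpha|u|^{1-p}\le C\|u\|_{L^\infty}^\alpha\eta^{p-1}\langle\cdot\rangle^{n(p-1)}$; multiplying by $\langle\cdot\rangle^n$ the powers of $\langle\cdot\rangle$ cancel exactly ($n+n(p-1)-np=0$), leaving $C\|u\|_{L^\infty}^\alpha(\eta\|u\|_{1,|\beta|-1})^{p-1}\|u\|_{1,|\beta|-1}$, and $p-1\le|\beta|-1\le2|\beta|$ turns the $\eta$‑power into $(1+\eta\|u\|_{1,|\beta|-1})^{2|\beta|}$. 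Summing the finitely many terms gives~\eqref{n3}.

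The $L^2$ estimates~\eqref{n4}--\eqref{n5} are the crux. In a lower‑order term I would put the $p-1$ low‑order factors in $L^\infty$ at the cost $\|u\|_{1,2m}^{p-1}\langle\cdot\rangle^{-n(p-1)}$ and keep the surviving factor $D^{\gamma_{i_0}}u$, with $|\gamma_{i_0}|\le|\beta|-1$, in $L^2$. Its lower‑order contribution is then measured by $\|u\|_{1,2m}$ if $|\gamma_{i_0}|\le2m$, by $\|u\|_{2,|\beta|-1}$ (resp.\ $\|u\|_{2,2m+2+k}$ in~\eqref{n5}) if $2m+1\le|\gamma_{i_0}|\le2m+2+k$, and by $\|u\|_{3,|\beta|-1}$ if $|\gamma_{i_0}|>2m+2+k$, which is exactly the list of norms appearing on the right of~\eqref{n4}--\eqref{n5}; its top‑order contribution, on the other hand, must be routed into the first term $\|u\|_{L^\infty}^\alpha\|\langle\cdot\rangle^wD^\beta u\|_{L^2}$ by a weighted product (Gagliardo--Nirenberg/Moser‑type) estimate of the kind developed in~\cite{CN}, using $\sum_i|\gamma_i|=|\beta|$ together with $|u|^\alpha\le\|u\|_{L^\infty}^\alpha$ to recover precisely the coefficient $\|u\|_{L^\infty}^\alpha$. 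In either case the coefficient $|\partial_z^a\partial_{\bar z}^bh(u)|\le C|u|^{\alpha+1-p}$ is absorbed via the two‑sided bound: the possibly negative power $|u|^{1-p}\le\eta^{p-1}\langle\cdot\rangle^{n(p-1)}$ is compensated by the decay $\langle\cdot\rangle^{-n(p-1)}$ of the $p-1$ low‑order factors, and the residual power of $\langle\cdot\rangle$ is at worst $\langle\cdot\rangle^{-n\alpha}$, which is $L^2$‑integrable exactly because $n>N/(2\alpha)$ by~\eqref{n2}; the powers of $\eta$ and of $\|u\|_{1,2m}$ produced in the process (each exponent $\le p-1\le J-1<2J+\alpha$, with $\eta\|u\|_{1,2m}\ge1$ used to absorb negative powers of $\eta$) are collected into $(1+\eta\|u\|_{1,2m})^{2J+\alpha}$. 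Summing the finitely many lower‑order terms yields~\eqref{n4} and~\eqref{n5}.

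I expect the main obstacle to be exactly this last weighted bookkeeping for~\eqref{n5}, where the weight $\langle\cdot\rangle^{J-|\beta|}$ itself depends on $|\beta|$ and shrinks as $|\beta|$ grows: one has to check, case by case on $|\gamma_{i_0}|$ and on the sign of $\alpha+1-p$, that after the weight has been redistributed among the $p$ factors the power of $\langle\cdot\rangle$ left on $D^{\gamma_{i_0}}u$ never exceeds the one allotted to its order by~\eqref{n27}--\eqref{n28}, and that the residual power stays compatible with $L^2$‑integrability — this is where the inequalities $k>N/2$, $n>N/2+1$, $n>N/(2\alpha)$ and $2m\ge k+n+1$ of~\eqref{n2} are all used. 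A related point, which is what distinguishes this refinement from~\eqref{n1b}, is that one exploits not only the lower bound~\eqref{n1} on $|u|$ but also the upper bound $|u|\le\|u\|_{1,0}\langle\cdot\rangle^{-n}$; through the homogeneity of $h$, this is precisely what allows the top‑order part of the lower‑order terms to be absorbed into the small, $\|u\|_{L^\infty}^\alpha$‑weighted term rather than into the $(1+\eta\|u\|_{1,2m})$‑term, where it would not fit when $\eta$ is taken at its minimal admissible value.
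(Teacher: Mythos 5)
Your proposal is correct and follows essentially the same route as the paper: Leibniz/Fa\`a di Bruno expansion into the principal term~\eqref{n6} plus lower-order terms~\eqref{n8}, the lower bound~\eqref{n1} to absorb negative powers of $|u|$ into powers of $\eta\langle x\rangle^{n}$, the observation that $2m\ge k+n+1$ forces at most one derivative factor to exceed order $2m$ (so that factor alone is placed in the weighted $L^2$ norm $\|\cdot\|_{1,2m}$, $\|\cdot\|_{2,\cdot}$ or $\|\cdot\|_{3,\cdot}$ according to its order, the rest going in $L^\infty$), and $n\alpha>N/2$ for the all-low-order case. The only superfluous element is your invocation of a Gagliardo--Nirenberg/Moser product estimate to route a ``top-order contribution'' into the first term: since every $\gamma_i$ in a lower-order term has $|\gamma_i|\le|\beta|-1$, no such contribution exists and your own case analysis on $|\gamma_{i_0}|$ already covers everything, exactly as in the paper.
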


\begin{proof}
The case $ |\beta |\le 1$ is immediate, so we suppose $ |\beta |\ge 2$.
We observe that
\begin{equation*} 
D^{\beta}(|u|^{\alpha}u)=\sum_{\gamma+\rho=\beta}c_{\gamma,\rho}D^{\gamma
}(|u|^{\alpha})D^{\rho}u
\end{equation*} 
with the coefficients $c_{\gamma,\rho}$ given by Leibniz's rule. Since
$|u|^{\alpha}=(u\overline{u})^{\frac{\alpha}{2}}$ we see that the development
of $D^{\beta}(|u|^{\alpha}u)$ contains on the one hand the term
\begin{equation} \label{n6}
A=\left(  1+\frac{\alpha}{2}\right)  |u|^{\alpha}D^{\beta}u+\frac{\alpha}
{2}|u|^{\alpha-2}u^ 2 D^{\beta}\overline{u}, 
\end{equation}
and on the other hand, terms of the form
\begin{equation} \label{n8} 
B=|u|^{\alpha-2p}D^{\rho}u\prod_{j=1}^{p}D^{\gamma_{1,j}}uD^{\gamma_{2,j}
}\overline{u} 
\end{equation}
where
\begin{gather*} 
\gamma+\rho=\beta,\quad 1\leq p\leq|\gamma|,\quad|\gamma_{1,j}+\gamma
_{2,j}|\geq1, \\
{\displaystyle\sum _{j=0}^{p}}
(\gamma_{1,j}+\gamma_{2,j})=\gamma,\quad  | \gamma_{i,j} | \le  | \beta |-1,\text{
}i=1,2.
\end{gather*} 
It follows from~\eqref{n6} that
\begin{equation} \label{n6b} 
 | A | \le (\alpha +1)  |u|^\alpha  | D^\beta u |.
\end{equation}
Moreover, it follows from~\eqref{n1}  that $|u|^{-2p} \le \eta^{2p}\langle x\rangle^{2p{n}} $, 
so that~\eqref{n8} implies
\begin{equation} \label{n16}
|  B |  \le |u|^\alpha \eta^{2p}\langle x\rangle^{2p n } 
|D^{\rho}u|\prod_{j=1}^{p}|D^{\gamma_{1,j}}u|\,|D^{\gamma_{2,j}}u|.
\end{equation}
We begin by proving \eqref{n3}. It follows from~\eqref{n6b} that
\begin{equation}
\|  \langle \cdot \rangle ^{n}A \|  _{L^{\infty}} \le 
C \|  u \|  _{L^{\infty}}^{\alpha} \|  \langle \cdot \rangle ^{n}D^{\beta}u \|  _{L^{\infty}} . \label{n9} 
\end{equation}
Moreover, we deduce from~\eqref{n16} and~\eqref{fCA2} that
\begin{equation} \label{n17}
\| \langle \cdot \rangle^{{n}}B\| _{L^{\infty}}\le \| u \| _{ L^\infty  }^\alpha  (\eta\|  u\| 
_{1, {|  \beta |  -1}})^{2p }   \|  u\| 
_{1, {|  \beta |  -1}} . 
\end{equation}
Estimate \eqref{n3} follows from \eqref{n9} and \eqref{n17}.

Next, we prove \eqref{n4}. It follows from~\eqref{n6b} that
\begin{equation}
\left\|  \left\langle \cdot\right\rangle ^{n}A\right\|  _{L^ 2 } \le 
C\left\|  u\right\|  _{L^{\infty}}^{\alpha}\left\|  \left\langle
\cdot\right\rangle ^{n}D^{\beta}u\right\|  _{L^ 2 }. \label{n18}%
\end{equation}
Now, we estimate $ \langle x\rangle ^n B$. 
Suppose first that all the derivatives in the right-hand side of~\eqref{n16} are of order
$\leq2{m}$, then each of them is estimated by $\langle x\rangle^{-{n}}\| 
u\| _{1, 2m}$. Since also $ |u|\le \langle x\rangle   ^{-n}  \| u \| _{ 1, 2m }$, we obtain
\begin{equation}
 \langle x\rangle^{{n}}B \le \langle x\rangle ^{-n\alpha }  \| u \| _{ 1,2m }^{\alpha +1}  (\eta\|  u\| _{1, 2m})^{2p} .
\label{n19}%
\end{equation}
Moreover, $n \alpha= \frac {2n } {N} >\frac{N}{2}$ by~\eqref{n2}, so we deduce from~\eqref{n19} that
\begin{equation} \label{n21}
\| \langle \cdot \rangle^{{n}}B\| _{L^ 2 }\le  C \|  u\| _{1, 2m}^{\alpha+1} (\eta\|  u\|  _{1, 2m})^{2p} .
\end{equation}
Suppose now that one of the derivatives in the right-hand side of~\eqref{n16} is of
order greater or equal to $2{m}+1$, for instance $|\gamma_{1,1}|\geq2{m}+1$.
Note that $ |\gamma  _{i, j }| \le  |\beta |-1$, so this may only occur if $ |\beta |\ge 2m+2$.
Since the sum of all derivatives has order $|\beta|\le 2m+2+k$, we have
\begin{equation*} 
 |\beta |-  | \gamma  _{ 1,1 } |\le |\beta | - (2m+1) \le 1+k \le 1+k+n \le 2m
\end{equation*} 
by the last inequality in~\eqref{n2}. It follows that all other derivatives have order $\le 
2{m}$. Thus, \eqref{n16} and~\eqref{fCA2} yield
\begin{equation*}
\langle x\rangle ^n |  B |  \le  |u|^\alpha (\eta\|  u\| _{1, 2m})^{2p}\langle
x\rangle ^n  |D^{\gamma_{1,1} }u|. 
\end{equation*}
Since $\| \langle x\rangle^{{n}}D^{\gamma_{1,1}}u\| _{L^ 2 }\leq\| 
u\| _{2,  |\beta |-1}$ by~\eqref{fCA3}, we see that 
\begin{equation}
\| \langle \cdot \rangle^{{n}}B\| _{L^ 2 }\le    \| u \| _{ L^\infty  }^\alpha  (\eta\|  u\| _{1, 2m})^{2p} \|  u\|  _{2,  |\beta |-1}. \label{n20}%
\end{equation}
Estimates \eqref{n18}, \eqref{n21} and \eqref{n20} imply~\eqref{n4}.
(Recall that $  \| u \| _{ L^\infty  }\le  \| \langle \cdot \rangle ^n u \| _{ L^\infty  }\le  \| u \| _{ 1, 2m } $.)

Finally, we prove \eqref{n5}. It follows from~\eqref{n6b} that
\begin{equation}
\|  \langle \cdot \rangle^{{J}-|\beta|}A\|  _{L^ 2 }\leq C\| 
u \|  _{L^{\infty}}^{\alpha} \|  \langle x\rangle^{{J}-|\beta
|}D^{\beta}u \|  _{L^ 2 }. \label{n23}%
\end{equation}
We now estimate $\langle x\rangle^{{J}-|\beta|}B$. 
We first assume that all the derivatives in the right-hand side of~\eqref{n16} are of order
$\leq2{m}$. It follows that they are estimated by $\langle x\rangle^{-{n}}\| 
u\| _{1, 2m}$, and we obtain%
\[
\langle x\rangle^{{J}-|\beta|}\left|  B\right|  \leq\langle
x\rangle^{{n}}\left|  B\right|  \leq C(\eta\|  u\| _{1, 2m})^{2p}\langle x\rangle^{-\alpha n}\|  u\| _{1, 2m}^{\alpha+1}.
\]
Since $n \alpha= \frac {2n } {N} >\frac{N}{2}$ by~\eqref{n2}, we obtain
\begin{equation}
\| \langle \cdot \rangle^{{J}-|\beta|}B\| _{L^ 2 }\leq C(\eta\| 
u\| _{1, 2m})^{2p}\|  u\| _{1, 2m}^{\alpha+1}.
\label{n26}%
\end{equation}
Suppose now that one of the
derivatives in the right-hand side of~\eqref{n16} is of order $\geq2{m}+1 $,
for example $\left|  \gamma_{1,1}\right|  \geq2{m}+1$. 
Since the sum of all derivatives has order $|\beta|\le J= 2m+2+k + n$, we have
\begin{equation*} 
 |\beta |- | \gamma  _{ 1,1 } |\le |\beta | - (2m+1) \le 1+k +n \le 2m
\end{equation*} 
by the last inequality in~\eqref{n2}. It follows that all other derivatives have order $\le 
2{m}$, hence are estimated by $\langle x\rangle^{-{n}}\| 
u\| _{1, 2m}$. Therefore, \eqref{n16} yields 
\begin{equation} \label{n22b}
 |  B |  \le  |u|^\alpha (\eta\|  u\| _{1, 2m})^{2p}  |D^{\gamma_{1,1} }u|. 
\end{equation}
If $2{m}+1\le |\gamma_{1,1}|\leq2{m}+2+{k}$, we have $\| \langle \cdot \rangle
^{{J}-|\beta|}D^{\gamma_{1,1}}u\| _{L^ 2 }\leq\| \langle \cdot \rangle^{{n}%
}D^{\gamma_{1,1}}u\| _{L^ 2 }\leq\|  u\| _{2, {2{m}+2+{k}}}$,
so we deduce from~\eqref{n22b} that%
\begin{equation}
\| \langle \cdot \rangle^{{J}-|\beta|}B\| _{L^ 2 }\le (\eta\|  u\| 
_{1, 2m})^{2p} \|  u\| _{L^\infty }^{\alpha}\| 
u\| _{2, {2{m}+2+{k}}}. \label{n24}%
\end{equation}
If $2{m}+3+{k}\leq|\gamma_{1,1}|\leq|\beta|-1$, then $\| \langle
x\rangle^{{J}-|\beta|}D^{\gamma_{1,1}}u\| _{L^ 2 }\leq\| \langle
x\rangle^{{J}-|\gamma_{1,1}|}D^{\gamma_{1,1}}u\| _{L^ 2 }\leq\| 
u\| _{3,  |\beta |-1} $, and thus
\begin{equation}
\| \langle \cdot \rangle^{{J}-|\beta|}B\| _{L^ 2 }\leq(\eta\|  u\| 
_{1, 2m})^{2p}\|  u\| _{L^\infty }^{\alpha}\| 
u\| _{3,  |\beta |-1} . \label{n25}%
\end{equation}
Estimate~\eqref{n5} follows from~\eqref{n23}, \eqref{n26}, \eqref{n24} and~\eqref{n25}.
\end{proof}

\section{Local and global existence for~\eqref{NLS2}} \label{sTEQ} 

Throughout this section, we assume~\eqref{fCA1}, \eqref{n2}, \eqref{n11} and we consider $\Spa $ defined by~\eqref{n27}-\eqref{n28}.
By using the pseudo-conformal transformation~\eqref{fNLS1:0},  we transform equation~\eqref{NLS1} into the initial-value problem~\eqref{NLS2}, or its equivalent form~\eqref{NLS3}.
We begin with a local existence result for solutions of~\eqref{NLS2}, which follows from the results in~\cite{CN}.

\begin{prop} \label{p1}
Let $\lambda \in \C $ and $b\ge 0$. 
If $\DIb \in \Spa $ satisfies
\begin{equation} \label{eThm1:1}
\inf_{x\in\R^N }\langle x\rangle^{n}\left|  \DIb (x)\right|  >0, 
\end{equation}
then there exist $0< T < \frac {1} {b}$ and a unique solution $v\in  C([0,T], \Spa)$ of~$\eqref{NLS2}$ satisfying
\begin{equation} \label{fNLS12} 
\inf  _{ 0\le t\le T } \inf  _{ x\in \R^N  }  ( \langle x\rangle ^n  |v(t,x)| ) > 0 .
\end{equation} 
Moreover, $v$ can be extended on a maximal existence interval $[0, \Tma )$  with $0< \Tma \le \frac {1} {b}$ to a solution $v \in C([0, \Tma ), \Spa )$ satisfying~\eqref{fNLS12} for all $0<T<\Tma$; and if $\Tma <\frac {1} {b}$, then
\begin{equation} \label{fCA5} 
 \| v (t) \|_\Spa + \Bigl(  \inf_{x\in\R^N }\langle x\rangle^{n}\left|  v(t, x)\right|  \Bigr)^{-1} \goto _{ t\uparrow \Tma } \infty .
\end{equation} 
\end{prop}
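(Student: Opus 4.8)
The plan is to construct the solution of the integral equation~\eqref{NLS3} by a contraction argument, and then to continue it by the usual maximality argument. The point requiring care is that $w\mapsto|w|^{\alpha}w$ is not a smooth (nor even Lipschitz) map on all of $\Spa$: the estimates~\eqref{n1b}--\eqref{n1c} are available only on the subset of $w\in\Spa$ for which $\langle x\rangle^{n}|w|$ stays bounded away from $0$, which is precisely the reason for the extra requirement~\eqref{fNLS12}. Concretely, I would set $\kappa_{0}=\inf_{x\in\R^{N}}\langle x\rangle^{n}|\DIb(x)|$, positive by~\eqref{eThm1:1}, and $M=2\CTd\|\DIb\|_{\Spa}$, and for small $T<\tfrac1b$ I would work in the set $\Ens_{T}$ of all $v\in C([0,T],\Spa)$ satisfying $\sup_{[0,T]}\|v(t)\|_{\Spa}\le M$ and $\inf_{[0,T]}\inf_{x\in\R^{N}}\langle x\rangle^{n}|v(t,x)|\ge\tfrac{\kappa_{0}}{2}$; this is a complete metric space for the distance inherited from $C([0,T],\Spa)$, since both constraints are closed (recall that $\|\cdot\|_{\Spa}$ dominates the $\langle\cdot\rangle^{n}$-weighted $L^{\infty}$ norm, by~\eqref{n28}). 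On $\Ens_{T}$, condition~\eqref{n1} holds with $\eta=2/\kappa_{0}$, so~\eqref{n1b}--\eqref{n1c} are available with constants independent of $v\in\Ens_{T}$.

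Let $\Phi$ be the Duhamel map given by the right-hand side of~\eqref{NLS3}, and note that for $T<\tfrac1b$ the weight $(1-bs)^{-1}$ is bounded on $[0,T]$, with $\int_{0}^{T}(1-bs)^{-1}\,ds\to 0$ as $T\to0$. By~\eqref{fCA7} and~\eqref{n1b}, $\|\Phi(v)(t)\|_{\Spa}\le\CTd\|\DIb\|_{\Spa}+|\lambda|\CTd\CTu(1+2M/\kappa_{0})^{2J}M^{\alpha+1}\int_{0}^{T}(1-bs)^{-1}\,ds$, which is $\le M$ for $T$ small; and writing $\Phi(v)(t)=\DIb+(e^{it\Delta}\DIb-\DIb)-i\lambda\int_{0}^{t}(1-bs)^{-1}e^{i(t-s)\Delta}|v(s)|^{\alpha}v(s)\,ds$, estimating the middle term pointwise by~\eqref{eLE3:12} and bounding the last term in $\Spa$-norm as above, one gets $\langle x\rangle^{n}|\Phi(v)(t,x)|\ge\kappa_{0}-t\CTd\|\DIb\|_{\Spa}-C\int_{0}^{T}(1-bs)^{-1}\,ds\ge\tfrac{\kappa_{0}}{2}$ for $T$ small (with $C$ a constant), so $\Phi(\Ens_{T})\subset\Ens_{T}$. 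Likewise, \eqref{fCA7} and~\eqref{n1c} give $\sup_{[0,T]}\|\Phi(v_{1})(t)-\Phi(v_{2})(t)\|_{\Spa}\le|\lambda|\CTd\CTu(1+4M/\kappa_{0})^{2J+1}(2M)^{\alpha}\bigl(\int_{0}^{T}(1-bs)^{-1}\,ds\bigr)\sup_{[0,T]}\|v_{1}(t)-v_{2}(t)\|_{\Spa}$, with prefactor $<1$ after shrinking $T$. Since $\Phi(v)\in C([0,T],\Spa)$ by~\eqref{fCA7b1} and dominated convergence, Banach's theorem yields the unique $v\in\Ens_{T}$ solving~\eqref{NLS2} and satisfying~\eqref{fNLS12}; uniqueness within the full class of $C([0,T],\Spa)$ solutions satisfying~\eqref{fNLS12} (with no a priori size bound) then follows by a Gronwall argument based on~\eqref{n1c} on a short subinterval on which both solutions lie in a common $\Ens_{T'}$, combined with a connectedness argument.

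For the maximal solution, I would let $\Tma\in(0,\tfrac1b]$ be the supremum of the $T$ for which a solution on $[0,T]$ satisfying~\eqref{fNLS12} exists, and glue the local solutions to obtain $v\in C([0,\Tma),\Spa)$ satisfying~\eqref{fNLS12} on every $[0,T]$ with $T<\Tma$. Suppose $\Tma<\tfrac1b$ and~\eqref{fCA5} fails; then $\|v(t)\|_{\Spa}\le K$ and $\inf_{x}\langle x\rangle^{n}|v(t,x)|\ge K^{-1}$ for some $K<\infty$ and all $t$ in a left neighbourhood of $\Tma$, where moreover $(1-bs)^{-1}\le(1-b\Tma)^{-1}<\infty$. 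Running the construction of the previous paragraph from an initial time $t_{0}<\Tma$ close to $\Tma$ with datum $v(t_{0})$ then produces a solution on an interval whose length is bounded below by a positive constant depending only on $b$, $\lambda$, $K$ and $\Tma$ — hence independent of $t_{0}$ — which, by uniqueness, extends $v$ to an interval $[0,T]$ with $T>\Tma$, contradicting the definition of $\Tma$. Hence~\eqref{fCA5} holds.

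The step I expect to be the main obstacle is the invariance $\Phi(\Ens_{T})\subset\Ens_{T}$, i.e.\ checking that the Duhamel iterate keeps $\langle x\rangle^{n}|v|$ bounded away from $0$: one cannot work on a plain ball of $C([0,T],\Spa)$, and preservation of this lower bound rests on the near-identity estimate~\eqref{eLE3:12} for $e^{it\Delta}$ in the $\langle\cdot\rangle^{n}$-weighted $L^{\infty}$ norm, together with the fact that this norm is controlled by $\|\cdot\|_{\Spa}$, so that the Duhamel correction is small in the relevant norm for $T$ small.
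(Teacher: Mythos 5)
Your proposal is correct and follows essentially the same route as the paper: a fixed-point argument in a closed subset of $C([0,T],\Spa)$ on which $\langle x\rangle^{n}|v|$ is bounded below (so that~\eqref{n1b}--\eqref{n1c} apply with a fixed $\eta$), with the lower bound propagated via~\eqref{eLE3:12}, followed by the standard gluing/restart argument for the maximal interval and the blowup alternative. One small imprecision: the negation of~\eqref{fCA5} only yields boundedness along some sequence $t_{n}\uparrow\Tma$, not on a whole left neighbourhood of $\Tma$, but since your restart argument only needs a single starting time sufficiently close to $\Tma$ at which the bounds hold, this does not affect the validity of the proof.
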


\begin{proof} 
Given $S >0$, $f\in C([0, S], \C)$ and  $\DIb \in \Spa$ satisfying~\eqref{eThm1:1}, we consider the equation
\begin{equation} \label{fCA6} 
v(t)= e^{it \Delta } \DIb - i  \int _0^t e^{i (t -s) \Delta } f(s)  |v (s)|^\alpha v(s) \,ds .
\end{equation} 
We first observe that a local solution of~\eqref{fCA6} can be constructed by applying the method of~\cite[Proof of Proposition~3]{CN}.  Indeed, let 
\begin{align} 
\eta & \ge  2 ( \inf  _{ x\in \R^N  }  \langle  x \rangle ^\Imqd  |\DIb (x) | )^{-1} \label{fNLS8}  \\
M &\ge  1+  \CTd  \| \DIb \|_\Spa . \label{fNLS9} 
\end{align} 
Given $0< T\le S$,  set 
\begin{equation*} 
\begin{split} 
\Ens = \{ v\in C([ 0 , T ], \Spa ); & \, \| v \| _{ L^\infty ( (0,T), \Spa ) } \le M \text{ and } \\ & \eta \inf _{ x\in \R^N  } \langle x \rangle ^\Imqd   |v(t,x)| \ge 1  \text{ for }  0<t<T \}
\end{split} 
\end{equation*} 
so that $\Ens$ with the distance $\dist (u,v)=  \|u-v\|_{L^\infty ((0 ,T), \Spa )}$ is a complete metric space. Given  $v\in \Ens$, we set
\begin{equation*}
\Psi   _{ \DIb, v } (t) = e^{it \Delta } \DIb  - i \int _0^t  e^{i(t-s) \Delta } f(s)  |v|^\alpha v \, ds
\end{equation*} 
for $0\le t \le T$. 
It follows easily from~\eqref{fCA7}, \eqref{eLE3:12}, \eqref{n1b} and~\eqref{n1c} that if
\begin{equation}  \label{fNLS10} 
T (1+ \eta ) \bigl[ M +   \CTu \CTd  \| f \| _{ L^\infty (0,T) } 
 (1+ 2 \eta M)^{2 \Imqh +1} (2 M)^{\alpha  +1} \bigr] \le 1
\end{equation} 
then the map $v\mapsto \Psi  _{ \DIb, v }$ is a strict contraction $\Ens \to \Ens$; and so $\Psi  _{ \DIb, v }$ has a fixed point, which is a solution of~\eqref{fCA6} on $[0, T]$. (See~\cite[Proof of Proposition~3]{CN} for details.)

We next observe that if $\DIb \in \Spa$ satisfies~\eqref{eThm1:1}, if $0<T<\frac {1} {b}$, and if $v, w\in C( [0, T], \Spa )$ are two solutions of~\eqref{fCA6} that both satisfy~\eqref{fNLS12}, then $u=v$. This follows easily from estimates~\eqref{fCA7} and~\eqref{n1c}, and Gronwall's inequality.

We now argue as follows. We consider $\DIb \in \Spa$ satisfying~\eqref{eThm1:1}, and we first apply the local existence result for~\eqref{fCA6} with 
\begin{equation*} 
f(t)= \lambda (1-bt)^{-1}
\end{equation*} 
where $\eta$ and $M$ are chosen sufficiently large as to satisfy~\eqref{fNLS8} and~\eqref{fNLS9}, and then $0<T<\frac {1} {b}$ is chosen sufficiently small so that~\eqref{fNLS10} holds. 
This yields a solution $u\in C([0,T], \Spa )$ of~\eqref{NLS3} satisfying~\eqref{fNLS12}.  
Next, we set 
\begin{equation}  \label{fNLS13} 
\begin{split} 
\Tma =  \max \{ T\in (0,  \textstyle{ \frac {1} {b}});\, & \text{there exists a solution } \\ & v\in C([0,T], \Spa )  \text{ of }   \eqref{NLS3}  \text{ satisfying }\eqref{fNLS12}  \}.
\end{split} 
\end{equation} 
It follows that $0 < \Tma \le \frac {1} {b}$. Moreover, we deduce from the uniqueness property that there exists a solution $v\in C ( [0, \Tma) , \Spa)$ of~\eqref{NLS3} which satisfies~\eqref{fNLS12} for all $0< T < \Tma$. 
Finally, we prove the blowup alternative~\eqref{fCA5}. Assume by contradiction that $\Tma < \frac {1} {b}$, and that there exist $B>0$ and a sequence $(t_n) _{ n\ge 1 }$ such that $t_n \uparrow \Tma$ and
\begin{equation} \label{fNLS14} 
 \| v (t_n ) \|_\Spa + \Bigl(  \inf_{x\in\R^N }\langle x\rangle^{n}\left|  v(t_n ,  x)\right|  \Bigr)^{-1} \le B .
\end{equation} 
We now set $\eta =2B$ and $M= 1+ \CTd B$, so that~\eqref{fNLS8}-\eqref{fNLS9} hold with $\DIb$ replaced by $v(t_n)$, for all $n\ge 1$. We fix $\Tma < \tau  < \frac {1} {b}$, then we fix $0 < T < \tau  - \Tma $ sufficiently small so that 
\begin{equation}  \label{fNLS15} 
T (1+ \eta ) \bigl[ M +   \CTu \CTd  (1-b \tau  )^{-1}  (1+ 2 \eta M)^{2 \Imqh +1} (2 M)^{\alpha  +1} \bigr] \le 1 .
\end{equation} 
If $0\le t\le T$, then $t_n + t \le \Tma + T\le \tau $, so that $ \| (1-b (t_n + \cdot ))^{-1} \| _{ L^\infty (0,T) }\le (1-b \tau  )^{-1}$. Thus~\eqref{fNLS15} implies that~\eqref{fNLS10} is satisfied with $f (t) \equiv (1- b(t_n +t))^{-1}$ for all $n\ge 1$. It follows from the local existence result that for all $n\ge 1$ there exists $v_n \in C([0,T], \Spa)$ satisfying~\eqref{fNLS12}, which is a solution of the equation
\begin{equation*} 
v_n (t)= e^{it \Delta } v(t_n) - i  \int _0^t e^{i (t -s) \Delta } f(t_n + s)  |v_n (s)|^\alpha v_n (s) \,ds .
\end{equation*} 
Setting now
\begin{equation*} 
w_n (t) = 
\begin{cases} 
v(t) & 0\le t\le t_n \\
v_n ( t- t_n) & t_n\le t\le t_n + T
\end{cases} 
\end{equation*} 
we see that $w_n \in C([0, t_n +T], \Spa)$, that $w_n $ satisfies~\eqref{fNLS12} with $T$ replaced by $t_n +T$, and that $w_n $ is a solution of~\eqref{NLS3} on $[0, t_n+T]$. Since $t_n + T > \Tma$ for $n$ large, we obtain a contradiction with~\eqref{fNLS13}. This completes the proof. 
\end{proof} 

Our next result shows that if $\DIb \in \Spa$ satisfies~\eqref{eThm1:1} and $b$ is sufficiently large, then the corresponding solution of~\eqref{NLS2} is defined on  $[0,  \frac {1} {b})$ and satisfies certain estimates as $t \uparrow \frac {1} {b}$. We first comment on the strategy of our proof in the following remark, then we introduce the required notation and state our result in Proposition~\ref{p3}.

\begin{rem} \label{eRem3} 
We estimate derivatives of $v$, for instance $ \|  \langle \cdot \rangle ^nD^\beta v\| _{ L^\infty  }$, by a contraction argument. 
For this, we assume that 
\begin{equation}  \label{eRem3:1} 
\|  v (t)  \| _{ L^\infty  } \le  C
\end{equation} 
and
\begin{equation}  \label{eRem3:2} 
\|  \langle \cdot \rangle ^n D^\beta  v (t) \| _{ L^\infty  } \le  C (1-bt) ^{-\mu } 
\end{equation} 
and we want to recover~\eqref{eRem3:1}-\eqref{eRem3:2} through equation~\eqref{NLS2}. 
It is not too difficult to estimate $\|  v (t)  \| _{ L^\infty  }$ by using equation~\eqref{NLS2}, estimate~\eqref{eRem3:2}, and the assumption $\Im \lambda \le 0$, so we concentrate on~\eqref{eRem3:2}. 
We use Proposition~\ref{eP1}, and then we apply~\eqref{fCA19}. This yields an estimate of the form~\eqref{eRem3:2} provided $ \|  \langle \cdot \rangle ^n D^\beta (  |v|^\alpha  v )\| _{ L^\infty  } $ is also estimated by $  C (1-bt) ^{-\mu } $.  We now apply Proposition~\ref{p2} to estimate $ \|  \langle \cdot \rangle ^nD^\beta (  |v|^\alpha v )\| _{ L^\infty  } $. 
The right-hand side of~\eqref{n3} contains two terms. It follows from~\eqref{eRem3:1}-\eqref{eRem3:2} that the first term is estimated by $C (1-bt) ^{-\mu } $. Neglecting the contribution of $\eta$, the second term in~\eqref{n3} is essentially of the form $ ( \sup _{  |\gamma |\le  |\beta |-1 } \|  \langle \cdot \rangle ^n D^\gamma   v (t) \| _{ L^\infty  } )^{ 2 |\beta | +1} $.
If we assume that $ \|  \langle \cdot \rangle ^n D^\gamma   v (t) \| _{ L^\infty  }$ is estimated by $ C (1-bt) ^{-\mu } $ for $ |\gamma |\le  |\beta | -1$,  then the second term in~\eqref{n3} gives a contribution of the form $ C (1-bt) ^{-\mu ( 2| \beta |+1)} $, which is not sufficient to obtain estimate~\eqref{eRem3:2}.  
Our solution to this difficulty is to assume that derivatives of different orders are estimated by different powers of $(1-bt)$. In other words, we assume that $\mu $ in~\eqref{eRem3:2} depends on $ |\beta |$. 
Therefore, we need a cascade of exponents, which we introduce below.
\end{rem} 

Let
\begin{equation} \label{fCA8} 
0 <  \overline{\sigma }  < ( 4J+2\alpha+1)^{- J}
\end{equation} 
and set
\begin{equation} \label{fCA9} 
\sigma _j=
\begin{cases} 
0 & j=0 \\
( 4J+2\alpha+ 2)^j   \overline{\sigma }  & 1\le j\le 2m \le J 
\end{cases} 
\end{equation} 
so that 
\begin{equation} \label{fCA10} 
0= \sigma _0 <  \overline{ \sigma }  < \sigma _j< \sigma _k\le \sigma _J < 1, \quad 1\le j<k\le J.
\end{equation} 
Given $0<T < \frac {1} {b}$ and $v\in C([0,T] , \Spa )$ satisfying~\eqref{fNLS12}, we define
\begin{gather*} 
\Phi _{1, T} = \sup _{ 0\le t<  T } \sup_{ 0\le j\le 2m}  (  1-bt )  ^{\sigma _j} \|  v\| _{1, j}   \\
\Phi _{2, T} = \sup _{ 0\le t< T } \sup_{ 0 \le j\le 2m + 2+ k}  (  1-bt )  ^{\sigma _j} \|  v\| _{2, j}  \\
\Phi _{3, T}  = \sup _{ 0\le t< T }  \sup_{0 \le j\le J}  (  1-bt )  ^{\sigma _j} \|  v\| _{3, j} \\
\Phi _{4, T}  = \sup _{ 0\le t< T }  \frac { ( 1- bt )^{\sigma _1}} { \displaystyle  \inf  _{ x\in \R^N  }  \langle x \rangle ^n  | v (t, x) |} 
\end{gather*} 
where the norms $  \| \cdot \| _{j, \ell  } $ are defined by~\eqref{fCA2}--\eqref{fCA4}, and we set
\begin{gather} 
\Phi _T =  \max \{ \Phi _{1, T} , \Phi _{2, T} , \Phi _{3, T} , \} \label{fCA12:b1} \\
\Psi _T =  \max \{ \Phi _{1, T} , \Phi _{2, T} , \Phi _{3, T} , \Phi _{4, T}  \} =  \max \{ \Phi _T , \Phi _{4, T}  \}  .\label{fCA12:b2} 
\end{gather} 
Note that~\eqref{fCA2}--\eqref{fCA4}  imply
\begin{gather} 
\Phi _{1, T} = \sup _{ 0\le t<  T } \sup_{ 0\le  |\beta | \le 2m}  (  1-bt )  ^{\sigma _{ |\beta |}} \| \langle \cdot \rangle ^n D^\beta  v\| _{L^\infty }  \label{fCA12b1:1} \\
\Phi _{2, T}  = \sup _{ 0\le t< T } \sup_{ 2m + 1\le  |\beta | \le 2m + 2+ k}  (  1-bt ) ^{\sigma _{ |\beta |}} \| \langle \cdot \rangle ^n D^\beta  v\| _{L^2 } \label{fCA12b1:2} \\
\Phi _{3, T}  = \sup _{ 0\le t< T }    \sup_{ 2m + 3+ k \le  |\beta | \le J}  (  1-bt )  ^{\sigma  _{  |\beta | }}  \| \langle \cdot \rangle ^{J -  |\beta |} D^\beta  v\| _{L^2 }  \label{fCA12b1:3} .
\end{gather} 
Moreover,  one verifies easily  that  
\begin{gather}  
\Phi _T \le  \| v \| _{ L^\infty  ((0,T), \Spa )}  \label{fCA13} \\
\Phi _T \ge  \| \langle \cdot \rangle ^n v  \| _{ L^\infty  ((0,T) \times \R^N )} + \frac {1} {\CTq} (1-b T )^{\sigma _J}  \| v  \| _{L^\infty ((0,T), \Spa )} \label{fCA14} 
\end{gather} 
where the constant $\CTq \ge 1$ is independent of $T$.

\begin{prop}  \label{p3}
Suppose $\Im \lambda \le 0$.  Given any $K>0$, there exists $b_0>1$ such that if $\DIb \in \Spa $ satisfies
\begin{equation} \label{fWTd}
 \| \DIb \|_\Spa +  \Bigl( \inf  _{ x\in \R^N  }  \langle x \rangle ^n  | \DIb (x) | \Bigr)^{-1} \le K
\end{equation} 
then for every $b\ge b_0$ the corresponding solution $v \in C ( [0, \Tma ), \Spa )$ of~\eqref{NLS3} given by Proposition~$\ref{p1}$,  satisfies $\Tma = \frac {1} {b}$ and 
\begin{equation} \label{fWT1}
 \sup  _{ 0< T < \frac {1} {b} } \Psi _T  \le  4 K
\end{equation}
where $\Psi _T $ is defined by~\eqref{fCA12:b2}. 
\end{prop}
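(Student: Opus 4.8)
The plan is to run a bootstrap (continuity) argument on $\Psi_T$, powered by the cascade of exponents $\sigma_j$ and the elementary estimate~\eqref{fCA19}. Write $v\in C([0,\Tma),\Spa)$ for the maximal solution of~\eqref{NLS3} given by Proposition~\ref{p1} (so it satisfies~\eqref{fNLS12}), and fix $b_0=b_0(K)>1$ only at the end, large in terms of $K$, the constants $\CTd,\CTc,\CTu,\CTs,\CTq,|\lambda|$, and the already-chosen numbers $\sigma_1,\sigma_J$. Note that $\Psi_T<\infty$ for every $T<\Tma$ (by~\eqref{fCA13} and~\eqref{fNLS12}), and that $T\mapsto\Psi_T$ is non-decreasing and continuous on $(0,\Tma)$ (sups of continuous functions over growing half-open intervals), with $\Psi_T\to\max\{\|\DIb\|_\Spa,(\inf_x\langle x\rangle^n|\DIb(x)|)^{-1}\}\le K$ as $T\downarrow0$. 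The core of the proof is the implication: \emph{if $\Psi_T\le 4K$ for some $T<\Tma$ and $b\ge b_0$, then in fact $\Psi_T\le 2K$.} Under the hypothesis $\Psi_T\le4K$ we have, for $s<T$, $\|v(s)\|_\Spa\le\CTq(1-bs)^{-\sigma_J}\Phi_T\le 4K\CTq(1-bs)^{-\sigma_J}$ by~\eqref{fCA14}, and $\|v(s)\|_{L^\infty}\le\Phi_{1,T}\le 4K$; the improvement is established in two steps.

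\emph{Step 1 (pointwise control of $|v|$).} Since $v$ never vanishes, $|v|=\sqrt{|v|^2}$ is $C^1$ in $t$, and~\eqref{NLS2} together with $\Im\lambda\le0$ gives, after division by $2|v|$,
\[
-|\Delta v|-(1-bt)^{-1}|\Im\lambda|\,|v|^{\alpha+1}\ \le\ \partial_t|v|\ \le\ |\Delta v| .
\]
Because $2\le 2m$, one has $|\Delta v(s,x)|\le C\langle x\rangle^{-n}\Phi_{1,T}(1-bs)^{-\sigma_2}$ with $\sigma_2<1$, hence $\int_0^t|\Delta v(s,x)|\,ds\le\frac{CK}{b}\langle x\rangle^{-n}$. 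The right-hand inequality and $\|\DIb\|_\Spa\le K$ then give $\|\langle\cdot\rangle^n v(t)\|_{L^\infty}\le 2K$ once $b$ is large. For the left-hand inequality, bound $|v|^{\alpha+1}\le(\langle x\rangle^{-n}\Phi_{1,T})^{\alpha}|v|$ to rewrite it as $\partial_t|v|\ge-|\Delta v|-bc(x)(1-bt)^{-1}|v|$ with $c(x)=b^{-1}|\Im\lambda|(\langle x\rangle^{-n}\Phi_{1,T})^{\alpha}\le b^{-1}|\Im\lambda|(4K)^{\alpha}$; multiplying by the integrating factor $(1-bt)^{-c(x)}$, integrating, and using $\inf_x\langle x\rangle^n|\DIb(x)|\ge1/K$ yields $\langle x\rangle^n|v(t,x)|\ge(1-bt)^{c(x)}\bigl(\tfrac1K-\tfrac{CK}{b}\bigr)\ge\frac{(1-bt)^{\sigma_1}}{2K}$ for $b$ large, since then $c(x)\le\sigma_1$ (the extra power $(1-bt)^{\sigma_1}$ built into $\Phi_{4,T}$ is exactly what absorbs the dissipative decay of $|v|$ when $\Im\lambda<0$, as foreshadowed by the ODE in Remark~\ref{eRM1}). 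In particular $\Phi_{4,T}\le 2K$, and also $\|\langle\cdot\rangle^n D^\beta v(t)\|_{L^\infty}\le 2K$ for $|\beta|=0$.

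\emph{Step 2 (higher derivatives).} Set $\eta(s)=2K(1-bs)^{-\sigma_1}$; by Step~1, $\eta(s)\inf_x\langle x\rangle^n|v(s,x)|\ge1$, so Proposition~\ref{p2} applies to $v(s)$ with this $\eta$. Writing~\eqref{NLS3} as $iv_t+\Delta v=f$ with $f(s)=\lambda(1-bs)^{-1}|v(s)|^{\alpha}v(s)$, apply the Duhamel estimates~\eqref{n29}--\eqref{n30} of Proposition~\ref{eP1} for $\beta$ in three ranges: $1\le|\beta|\le 2m$ with~\eqref{n29} and~\eqref{n3}/\eqref{n3b1}; $2m+1\le|\beta|\le 2m+2+k$ with~\eqref{n30} (weight $\langle\cdot\rangle^{n}$, i.e. $\mu=0$) and~\eqref{n4}; $2m+3+k\le|\beta|\le J$ with~\eqref{n30} (weight $\langle\cdot\rangle^{J-|\beta|}$, i.e. $\mu=|\beta|-2m-2-k$, $\nu=k+1$) and~\eqref{n5}. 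In each range, the relevant norm of $D^\beta(|v(s)|^\alpha v(s))$ splits into a ``good'' part $\le C(K)(1-bs)^{-\sigma_{|\beta|}}$ (using $\|v(s)\|_{L^\infty}\le4K$) and ``bad'' parts which, after substituting $\|v\|_{1,2m}\le\Phi_T(1-bs)^{-\sigma_{2m}}$, $\|v\|_{2,|\beta|-1},\|v\|_{3,|\beta|-1}\le\Phi_T(1-bs)^{-\sigma_{|\beta|-1}}$, $\eta(s)\le2K(1-bs)^{-\sigma_1}$ and $\Phi_T\le4K$, are $\le C(K)(1-bs)^{-\nu}$ with $\sigma_1\le\nu\le(4J+2\alpha+1)\sigma_{|\beta|-1}<(4J+2\alpha+2)\sigma_{|\beta|-1}=\sigma_{|\beta|}$, the upper bound coming from the defining recursion~\eqref{fCA9}. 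Multiplying by the factor $(1-bs)^{-1}$ from $f$, invoking~\eqref{fCA19} (valid, with prefactor $\le\frac{1}{b\sigma_1}$, since the exponents are $\ge\sigma_1>0$), and using $\CTc\int_0^t\|v(s)\|_\Spa\,ds\le\frac{4K\CTc\CTq}{b(1-\sigma_J)}$ from~\eqref{fCA14}, each of $\|\langle\cdot\rangle^n D^\beta v(t)\|_{L^\infty}$, $\|\langle\cdot\rangle^n D^\beta v(t)\|_{L^2}$, $\|\langle\cdot\rangle^{J-|\beta|}D^\beta v(t)\|_{L^2}$ is bounded by $K+\frac{C(K)}{b}(1-bt)^{-\sigma_{|\beta|}}\le(K+\tfrac{C(K)}{b})(1-bt)^{-\sigma_{|\beta|}}$. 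For $b$ large the prefactor is $\le2K$, so $\Phi_{1,T},\Phi_{2,T},\Phi_{3,T}\le2K$; with Step~1 this proves the key implication. \emph{This step is the crux, and the main obstacle:} one must verify that the cascade $\sigma_{|\beta|}=(4J+2\alpha+2)\sigma_{|\beta|-1}$ is steep enough that every bad exponent $\nu$ — which couples a lower-order factor $(1-bs)^{-\sigma_{|\beta|-1}}$ (or $-\sigma_{2m}$) with $\eta(s)^{p}\sim(1-bs)^{-p\sigma_1}$ for $p$ up to $2J+\alpha$ — stays strictly below $\sigma_{|\beta|}$, so that~\eqref{fCA19} returns the \emph{same} power $(1-bt)^{-\sigma_{|\beta|}}$, and that all constants are independent of $b,T$ apart from the gains $1/b$. (Here $|\beta|=0$ cannot be treated this way, since $\int_0^t(1-bs)^{-1}ds$ diverges; this is exactly why Step~1 and the hypothesis $\Im\lambda\le0$ are needed.)

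\emph{Step 3 (conclusion).} By the key implication and the continuity/monotonicity of $T\mapsto\Psi_T$ with $\Psi_{0^+}\le K$, the set $\{T\in(0,\Tma):\Psi_T\le4K\}$ is non-empty, open and closed in $(0,\Tma)$, hence equals $(0,\Tma)$; re-applying the implication, $\Psi_T\le2K$ for all $T<\Tma$. If $\Tma<\tfrac1b$, then $\Phi_T\le2K$ and~\eqref{fCA14} bound $\|v(t)\|_\Spa$ by $2K\CTq(1-b\Tma)^{-\sigma_J}$, while $\Phi_{4,T}\le2K$ bounds $\bigl(\inf_x\langle x\rangle^n|v(t,x)|\bigr)^{-1}$ by $2K(1-b\Tma)^{-\sigma_1}$, for all $t<\Tma$; this contradicts the blowup alternative~\eqref{fCA5}. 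Hence $\Tma=\tfrac1b$, and $\sup_{0<T<1/b}\Psi_T\le2K\le4K$, which is~\eqref{fWT1}.
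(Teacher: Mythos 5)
Your proof is correct and follows essentially the same strategy as the paper's: a bootstrap on $\Psi_T$ closing $4K$ to $2K$, the sign condition $\Im \lambda \le 0$ and the ODE for $|v|$ to control the zeroth-order norm and $\Phi_{4,T}$, Propositions~\ref{eP1} and~\ref{p2} together with the cascade~\eqref{fCA9} and the elementary estimate~\eqref{fCA19} for the higher derivatives, and the blowup alternative~\eqref{fCA5} to conclude $\Tma = \frac{1}{b}$. The only cosmetic difference is in the $\Phi_{4,T}$ bound, where you run Gronwall on $|v|$ with the integrating factor $(1-bt)^{-c(x)}$, $c(x)\le\sigma_1$, instead of integrating the exact equation for $|v|^{-\alpha}$ and absorbing the resulting logarithm with the weight $(1-bt)^{\alpha\sigma_1}$; both work.
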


\begin{proof}
Since $v\in C([0, \Tma), \Spa )$, we see that $  \| v \| _{ L^\infty ((0,T), \Spa) }  \to  \| \DIb \|_\Spa $ as $T\downarrow 0$. Therefore,  it follows from~\eqref{fCA13} and~\eqref{fWTd}  that $  \| v \|_T \le 2K$ if $T\in (0, \Tma)$ is sufficiently small, where $K$ is given by~\eqref{fWTd}. Moreover, from~\eqref{fWTd} and the property $v\in C([0, \Tma), \Spa )$, we  deduce that 
\begin{equation*} 
\sup _{ 0<t < T} \Bigl( \inf  _{ x\in \R^N  }  \langle \cdot \rangle ^n  | v (t, x) | \Bigr)^{-1}  \le  2 K 
\end{equation*} 
if $T\in (0, \Tma)$ is sufficiently small. 
Therefore, if we set
\begin{equation} \label{fCAb17}
T^\star = \sup  \{   0<T< \Tma;   \Psi _T \le  4 K \} 
\end{equation} 
then we see that $0 < T^\star \le \Tma$.
We claim that if $b$ is sufficiently large, then 
\begin{equation} \label{fFIN2} 
T^\star = \Tma .
\end{equation} 
Assuming~\eqref{fFIN2}, the conclusion of the theorem follows. Indeed, \eqref{fCA12:b2} and~\eqref{fCA14} imply that 
\begin{equation} \label{fFIN4} 
\Psi _T \ge (1- bT)^{\sigma _J} \max \Bigl\{ \sup  _{ 0< t< T }     \Bigl(  \inf  _{ x\in \R^N  }  \langle x \rangle ^n  | v (t, x) |  \Bigr)^{-1}, \frac {1} {\CTq}  \| v \| _{ L^\infty ((0,T), \Spa ) } \Bigr\}
\end{equation} 
If~\eqref{fFIN2} holds and $\Tma < \frac {1} {b}$, then it follows from~\eqref{fFIN4}  that 
\begin{equation*} 
\limsup _{ t\uparrow \Tma } \| v (t) \|_\Spa + \Bigl(  \inf_{x\in\R^N }\langle x\rangle^{n}\left|  v(t, x)\right|  \Bigr)^{-1} \le 4K (1 + \CTq) (1 -b \Tma)^{- \sigma _J} <\infty 
\end{equation*} 
which contradicts the blowup alternative~\eqref{fCA5}. Therefore, we have $T^\star = \Tma = \frac {1} {b}$, from which the desired conclusion easily follows. 

We now prove the claim~\eqref{fFIN2}, and we assume by contradiction that 
\begin{equation} \label{fFIN5} 
T^\star < \Tma .
\end{equation} 
It easily follows from~\eqref{fCAb17} and~\eqref{fFIN5} that
\begin{equation} \label{fFIN6} 
\Psi  _{ T^\star } =4K .
\end{equation} 
We will use the elementary estimate~\eqref{fCA19}, as well as the following consequence of~\eqref{fCA14} and~\eqref{fFIN6}. 
\begin{equation}  \label{fCAb2} 
\int _0^t  \| v(s) \|_\Spa \le 4K \CTq  \int _0^t (1-bs)^{-\sigma _J} \le  \frac { 4K \CTq } { b (1-\sigma _J) }  .
\end{equation} 
Next, we set
\begin{equation} \label{fFIN7b1} 
\eta (t)= 4K (1 -bt )^{- \sigma _1} 
\end{equation} 
so that by~\eqref{fFIN6} 
\begin{equation} \label{fFIN7} 
  \eta (t)  \inf  _{ x\in \R^N  }  \langle x \rangle ^n  | v (t, x) |   \ge 1
\end{equation} 
for all $0\le t\le T^\star$. Moreover, it follows from~\eqref{fFIN6} that for all $0\le t\le  T^\star$
\begin{equation} \label{fQQC} 
 \| v(t) \| _{ p , q } \le 4K (1 -bt )^{- \sigma _{q } } \quad  \text{if}\quad  
\begin{cases} 
0\le q \le 2 m & p=1  \\
 0\le q \le 2 m +2 + k & p=2 \\
 0\le q \le J & p=3 .
\end{cases} 
\end{equation} 
If $0\le j \le 2 m$, then by~\eqref{fFIN7b1} and~\eqref{fQQC} yield
\begin{equation} \label{fFIN7b4} 
1+ \eta (t)  \| v (t) \| _{ 1, j } \le 1+  (4K)^2 (1 -bt )^{- \sigma _1- \sigma _j} \le 2  (4K)^2 (1 -bt )^{- \sigma _1- \sigma _j} 
\end{equation}  
since $K\ge 1$.  Consider now
\begin{gather*} 
0\le \rho \le 2J+\alpha \\
0\le j\le 2m
\end{gather*} 
and $\ell, q$ such that
\begin{equation*} 
\begin{cases} 
\max\{2, j+1 \} \le \ell  \le 2 m & p=1  \\
\max\{2, j+1 \} \le \ell \le 2 m +2 + k & p=2 \\
\max\{2, j+1 \} \le \ell \le J & p=3 .
\end{cases} 
\end{equation*} 
Using the properties $\sigma _1\le \sigma  _{ \ell -1 }$, $\sigma _j \le \sigma  _{ \ell -1 }$, and $(4J + 2 \alpha +1) \sigma  _{ \ell -1 } \le \sigma _\ell$ (see~\eqref{fCA9}), we deduce from~\eqref{fQQC} (with $q =\ell -1$) and~\eqref{fFIN7b4}  that
\begin{equation} \label{fFIN7b2} 
\begin{split} 
( 1+ \eta (t)  \| v (t) \| _{ 1, j } ) ^\rho  \| v(t) \| _{ p , \ell -1} & \\  \le 2 ^{2J+ \alpha } & (4K)^{4J + 2\alpha +1} (1 -bt )^{ - (2J + \alpha ) (\sigma _1 + \sigma _j ) - \sigma  _{ \ell -1 } } \\ & \le  (8 K)^{4J + 2\alpha +1} (1 -bt )^{- \sigma _\ell} 
\end{split} 
\end{equation} 
for all $0\le t\le T^\star$.

We now estimate $\Phi _{4, T^\star} $.
It follows from~\eqref{NLS2} that (recall that $ |v|>0$ on $[0, T^\star ] \times \R^N $)
\begin{equation} \label{n46}
|  v |  _t = L + \Im \lambda (  1-bt ) ^{-1}|v|^{\alpha+1} 
\end{equation}
where
\begin{equation} \label{fDFL} 
L(t, x) = i\frac{ (  \overline{v}\Delta v-v\Delta \overline{v} )  }{2|v|} 
=  -  \frac {\Im  (  \overline{v} \Delta v )} { |v|} .
\end{equation} 
It follows from~\eqref{n46} that
\begin{equation} \label{n46:b1}
- \frac {1} {\alpha } \frac {\partial } {\partial t}  (  |v|^{ -\alpha } ) =    |v|^{ -\alpha -1} L    + \Im \lambda (  1-bt ) ^{ -1 }   .
\end{equation}
Setting
\begin{equation*} 
w (t, x) = \langle x\rangle ^n  |v (t,x) |
\end{equation*} 
we deduce from~\eqref{n46:b1} that
\begin{equation} \label{fCAc2}
- \frac {1} {\alpha } \frac {\partial } {\partial t}  ( w^{ - \alpha } ) =    \langle x\rangle ^n  w^{ - \alpha -1}  L   + \Im \lambda (  1-bt ) ^{ -1 } \langle x\rangle ^{-n\alpha }   .
\end{equation}
We note that for $0\le t< T^\star $
\begin{equation} \label{fCAc3}
  \langle x\rangle ^n   |L  | \le \| \langle \cdot \rangle ^n    \Delta v \| _{ L^\infty  } \le ( 1- bt )^{- \sigma _2}  \Psi  _{ T^\star  } \le 4K ( 1- bt )^{- \sigma _2} 
\end{equation} 
by~\eqref{fFIN6}. Integrating~\eqref{fCAc2}  in $t$, and applying~\eqref{fCAc3},   we obtain
\begin{equation*} 
\frac {1} {w(t,x)^\alpha }  \le \frac {1} {w( 0 ,x)^\alpha } + 4 \alpha K \int _0^t \frac {ds} {(1-bs) ^{ \sigma _2} w(s, x)^{\alpha +1}} 
+  \alpha  | \Im \lambda  |\int _0^t \frac {ds} { 1-bs } .
\end{equation*} 
Since $\frac {1} {w( 0 ,x)} \le K$ by~\eqref{fWTd} and $\frac {1} {w( t ,x)} \le 4K (1-b t ) ^{-\sigma _1}$ by~\eqref{fFIN7b1}-\eqref{fFIN7},  the above estimate implies
\begin{equation} \label{fCAc4}
\begin{split} 
\frac {1} {w(t,x)^\alpha } & \le  K^\alpha  + \alpha (4K )^{ \alpha +2} \int _0^t \frac {ds} {(1-bs) ^{ \sigma _2+ (\alpha +1) \sigma _1}  } 
+ \frac { \alpha   | \Im \lambda  | } {b }  | \log (1 - bt ) | .
\end{split} 
\end{equation} 
Note that $ \sigma _2+ (\alpha +1) \sigma _1 \le 2 \sigma _2 \le \sigma _J$ by~\eqref{fCA9}, so that~\eqref{fCAc4} yields
\begin{equation*}
\begin{split} 
\frac {1} {w(t,x)^\alpha } &  \le  K^\alpha  + \frac { \alpha (4K )^{ \alpha +2}  } {b (1- \sigma _J )} + \frac {  | \Im \lambda  | } {b }  | \log (1 - bt ) | ,
\end{split} 
\end{equation*} 
from which it follows that
\begin{equation}  \label{fCAc5}
\begin{split} 
\Phi _{4, T^\star}  & \le \sup _{ 0\le t< T^\star }    \Bigl[  ( 1- bt )^{\sigma _1}   \Bigl( K^\alpha  + \frac {\alpha (4K )^{ \alpha +2}  } {b (1- \sigma _J )} + \frac { \alpha  | \Im \lambda  | } {b }  | \log (1 - bt ) | \Bigr)^{\frac {1} {\alpha }} \Bigr]  \\ & \le    \Bigl( K^\alpha  + \frac { \alpha (4K )^{ \alpha +2}  } {b (1- \sigma _J )} + \frac { \alpha  | \Im \lambda  | } {b }  \sup _{ 0\le t< \frac {1} {b} }   [  ( 1- bt )^{\alpha \sigma _1}   | \log (1 - bt ) | ] \Bigr)^{\frac {1} {\alpha }} \\ & =    \Bigl( K^\alpha  + \frac { \alpha (4K )^{ \alpha +2}  } {b (1- \sigma _J )} + \frac { \alpha  | \Im \lambda  | } {b }  \sup _{ 0\le t< 1 }   [  t^{\alpha \sigma _1}   | \log t | ] \Bigr)^{\frac {1} {\alpha }}  .
\end{split} 
\end{equation} 
We next estimate $  \| \langle \cdot \rangle ^n v\| _{ L^\infty  } $. It follows from~\eqref{n46} that
\begin{equation} \label{fCAb9}
|  v |  _t \le   | \Delta v | .
\end{equation}
Note that by~\eqref{fCAb2}
\begin{equation} \label{fCA18} 
\int _0^t  \|  \langle x\rangle ^n \Delta v (s) \| _{ L^\infty  } \le \int _0^t  \| v(s) \|_\Spa \le \frac {4 K \CTq } { b (1-\sigma _J) }   .
\end{equation} 
Applying~\eqref{fCAb9}, \eqref{fCA18} and~\eqref{fWTd},  we obtain
\begin{equation} \label{n49}
 \| \langle \cdot \rangle ^n v(t) \| _{ L^\infty  } \le  \| \langle \cdot \rangle ^n \DIb \| _{ L^\infty  } + \int _0^t   \| \langle \cdot \rangle ^n \Delta v(s) \| _{ L^\infty  } \le  K +  \frac { 4 K \CTq  } { b (1-\sigma _J) }    .
\end{equation} 
We now estimate $ \| \langle \cdot \rangle ^n D^\beta v \| _{ L^\infty  }$ for $1\le  |\beta | \le 2m$, and we use the estimates of Propositions~\ref{eP1} and~\ref{p2}. 
Applying~\eqref{n29}, \eqref{fFIN7}, \eqref{n3b1}   and \eqref{n3}, we deduce that
\begin{equation}  \label{n36}
\begin{split} 
 \| \langle \cdot \rangle ^n D^\beta v \| _{ L^\infty  } & \le  \| \DIb \| _\Spa + \CTc  \int _0^t (  \|v \|_\Spa +  |\lambda | (1- bs )^{ -1}  \| \langle \cdot \rangle ^n D^\beta ( |v  |^\alpha v ) \| _{ L^\infty  }) \\
  \le  \| \DIb \| &_\Spa + \CTc  \int _0^t    \|v \|_\Spa +  |\lambda | \CTc \CTs  \int _0^t (1- bs)^{-1}  \| v\| _{ L^\infty  }^\alpha   \| \langle \cdot \rangle ^n D^\beta v \| _{ L^\infty  }  \\
   + \kappa  |\lambda | & \CTc \CTs  \int _0^t   (1- bs)^{-1}  \| v \| _{ L^\infty  }^\alpha  (1+ \eta (s) \|  v \| _{1, | \beta |  -1})^{2|  \beta|  }\|  v \|  _{1,  |  \beta|  -1} 
\end{split} 
\end{equation} 
with $\kappa =0$ if $ |\beta |=1$ and $\kappa =1$ if $ |\beta | \ge 2$.
Moreover, $  \|  \langle \cdot \rangle ^{n}D^{\beta}v\| _{L^\infty} \le \|  v\| _{1,   |\beta |} $, so that by~\eqref{fFIN6} 
\begin{equation} \label{fCA20} 
\|  v\| _{L^{\infty}}^\alpha \|  \langle \cdot \rangle ^{n}D^{\beta}v\| _{L^\infty}
\le   (  1-bs )  ^{-\sigma _{ |  \beta |  }}(4K)^{\alpha +1} .
\end{equation}
We deduce from~\eqref{fCA20} and~\eqref{fCA19} that
\begin{equation} \label{n39} 
 \int _0^t (1- bs)^{-1}  \| v\| _{ L^\infty  }^\alpha   \| \langle \cdot \rangle ^n D^\beta v \| _{ L^\infty  } \le \frac { (4K)^{\alpha +1} } {b \sigma  _{  |\beta | }} (1-bt )^{ -\sigma  _{  |\beta | }}.
\end{equation} 
Next, assuming $ |\beta | \ge 2$, we apply~\eqref{fFIN7b2} with $j=  |\beta |-1$, $\rho = 2 |\beta |$, $p= 1$ and $\ell =  |\beta |$, to obtain
\begin{equation*}
( 1+ \eta (s)  \| v ( s ) \| _{ 1,  |\beta |j-1 } ) ^{2 |\beta |}  \| v( s ) \| _{ 1 ,  |\beta | -1}  \le  (8 K)^{4J + 2\alpha +1} (1 -bs )^{- \sigma _{ |\beta |}} 
\end{equation*} 
so that
\begin{equation*}
 \| v (s) \| _{ L^\infty  }^\alpha ( 1+ \eta (s)  \| v ( s ) \| _{ 1,  |\beta |j-1 } ) ^{2 |\beta |}  \| v( s ) \| _{ 1 ,  |\beta | -1}  \le  (8 K)^{4J + 3\alpha +1} (1 -bs )^{- \sigma _{ |\beta |}} .
\end{equation*} 
Applying~\eqref{fCA19}, we deduce that
\begin{equation}  \label{n38}
\begin{split} 
 \int _0^t   (1- bs)^{-1}  \| v \| _{ L^\infty  }^\alpha   (1+ \eta (s) \|  v \| _{1, | \beta |  -1})^{2|  \beta|  } & \|  v \|  _{1,  |  \beta|  -1}  \\ & \le \frac {  (8 K)^{4J + 3\alpha +1} } {b \sigma  _{  |\beta | }} (  1-bt )  ^{ - \sigma  _{  |\beta | } } .
\end{split} 
\end{equation} 
It follows from~\eqref{n36},  \eqref{fWTd}, \eqref{fCAb2}, \eqref{n39} and~\eqref{n38} that
\begin{equation} \label{n40}
 \| \langle \cdot \rangle ^n D^\beta v \| _{ L^\infty   } \le K + \frac {4 K \CTq \CTc } { b (1-\sigma _J) } +  \frac { 2  |\lambda | \CTc \CTs  (8K)^{4J + 3\alpha  +1}  } {b \sigma  _{  |\beta | }}    (1-bt )^{ -\sigma  _{  |\beta | }}.
\end{equation} 
We next estimate $ \| \langle \cdot \rangle ^n D^\beta v \| _{ L^2  }$ for $2{m}+1\le |  \beta | \le 2m +2+ k$.
Estimates~\eqref{n30} (with $\mu =0$ and $\nu =  |\beta |- 2m -1$), \eqref{fFIN7}  and \eqref{n4} imply
\begin{equation}   \label{n41}
\begin{split} 
 \| \langle \cdot \rangle ^n D^\beta v \| _{ L^2 } & \le  \| \DIb \| _\Spa + \CTc  \int _0^t (  \|v \|_\Spa +  |\lambda | (1- bs )^{ -1}  \| \langle \cdot \rangle ^n D^\beta ( |v  |^\alpha v ) \| _{ L^2  }) \\
  \le  \| \DIb \| &_\Spa + \CTc  \int _0^t    \|v \|_\Spa +  |\lambda | \CTc \CTs \int _0^t (1- bs)^{-1}  \| v\| _{ L^\infty  }^\alpha   \| \langle \cdot \rangle ^n D^\beta v \| _{ L^2  }  \\
   + \CTc  \CTs & |\lambda |  \int _0^t (1- bs)^{-1}   (1+\eta\|  v \| _{1, 2m })^{2J+\alpha  } (  \| v \| _{ 1, 2m } + \|  v \|  _{2,  |  \beta|  -1}  ).
\end{split} 
\end{equation} 
We have $  \|  \langle \cdot \rangle ^{n}D^{\beta}v\| _{L^2} \le \|  v\| _{2,   |\beta |} $, so that by~\eqref{fFIN6} 
\begin{equation*} 
\|  v\| _{L^{\infty}}^\alpha \|  \langle \cdot \rangle ^{n}D^{\beta}v\| _{L^2}
\le  (4K) ^{\alpha+1} (  1-bs )  ^{-\sigma _{ |  \beta |  }} .
\end{equation*}
Applying~\eqref{fCA19}, we deduce that
\begin{equation} \label{fCA22} 
 \int _0^t (1- bs)^{-1}  \| v\| _{ L^\infty  }^\alpha   \| \langle \cdot \rangle ^n D^\beta v \| _{ L^2  } \le \frac { (4K)^{\alpha +1}} {b \sigma  _{  |\beta | }} (1-bt )^{ -\sigma  _{  |\beta | }}.
\end{equation} 
Next, we have by applying~\eqref{fFIN7b2} with $j= 2m$, $\rho = 2J + \alpha $, and successively $p= 1$ and $\ell = 2m+1$, then $p= 2$ and $\ell =  |\beta |$
\begin{equation*} 
( 1+  \eta\|  v \| _{1, 2m })^{2J+\alpha  } (  \| v \| _{ 1, 2m } + \|  v \|  _{2,  |  \beta|  -1}  )  \le      2 (8 K)^{4J + 2\alpha +1} (1 -bs )^{- \sigma _{  |\beta | }}  .
\end{equation*} 
It then follows from~\eqref{fCA19} that
\begin{equation}  \label{n43}
\begin{split} 
  \int _0^t (1- bs)^{-1}   (1+\eta\|  v \|  _{1, 2m })^{2J+\alpha  }  (  \| v \| _{ 1, 2m } + \|  v \|  _{2,  |  \beta|  -1}  ) & \\  \le \frac { 2 (8 K)^{4J + 2\alpha +1} } {b \sigma  _{  |\beta | }} &   (  1-bt )  ^{ - \sigma  _{  |\beta | } } .
\end{split} 
\end{equation} 
Applying~\eqref{fWTd}, \eqref{fCAb2}, \eqref{fCA22} and~\eqref{n43}, we deduce from~\eqref{n41} that
\begin{equation} \label{n40b1}
 \| \langle \cdot \rangle ^n  D^\beta v \| _{ L^2  }  \le  K + \frac { 4K \CTq \CTc } { b (1-\sigma _J) }  +  \frac { 3  |\lambda | \CTc \CTs (8 K)^{4J + 2\alpha +1}} {b \sigma  _{  |\beta | }}   (1-bt )^{ -\sigma  _{  |\beta | }}.
\end{equation} 
Now, we estimate  $ \| \langle \cdot \rangle ^{J -  |\beta | } D^\beta v \| _{ L^2  }$ for $ {m}+ 3+{k} \le  |\beta | \le J$.
It follows from~\eqref{n30} (with $\mu =-   |\beta | + n - J$ and $\nu = k+1$), \eqref{fFIN7},  and \eqref{n5} that
\begin{equation}   \label{fCA24}
\begin{split} 
 \| \langle \cdot \rangle ^{J -  |\beta |}  D^\beta v \| _{ L^2 }  &
  \le  \| \DIb \| _\Spa  + \CTc  \int _0^t    \|v \|_\Spa \\  & +  |\lambda | \CTc \CTs \Bigl[ \int _0^t (1- bs)^{-1}  \| v\| _{ L^\infty  }^\alpha   \| \langle \cdot \rangle ^{J -  |\beta |}  D^\beta v \| _{ L^2  }  \\
 +   \int _0^t (1- bs)^{-1}  & (1+\eta  \|  v  \| _{1, 2m })^{2J+\alpha  } (  \| v \| _{ 1, 2m } + \|  v \|  _{2, 2m + 2 + k} +  \| v \| _{ 3,  |\beta | -1} ) \Bigr].
\end{split} 
\end{equation} 
We have $  \|  \langle \cdot \rangle ^{J-  |\beta |}D^{\beta}v\| _{L^2} \le \|  v\| _{3,   |\beta |} $, hence
\begin{equation} \label{fCA25} 
\|  v\| _{L^{\infty}}^\alpha \|  \langle \cdot \rangle ^{J - |\beta |}D^{\beta}v\| _{L^2}
\le (4 K )^{\alpha+1} (  1-bt )  ^{-\sigma _{ |  \beta |  }} 
\end{equation}
by~\eqref{fFIN6}.  
Applying~\eqref{fCA19}, we obtain
\begin{equation} \label{fCA26} 
 \int _0^t (1- bs)^{-1}  \| v\| _{ L^\infty  }^\alpha   \| \langle \cdot \rangle ^{J - |\beta |} D^\beta v \| _{ L^2  } \le \frac { (4K)^{ \alpha +1}} {b \sigma  _{  |\beta | }} (1-bt )^{ -\sigma  _{  |\beta | }}.
\end{equation} 
Next, we apply~\eqref{fFIN7b2} with $j= 2m$, $\rho = 2J + \alpha $, and successively $p= 1$ and $\ell = 2m+1$, then $p= 2$ and $\ell = 2m + 3 + k$, then $p= 3$ and $\ell =  |\beta |$

\begin{equation*} 
\begin{split} 
( 1+  \eta\|  v \| _{1, 2m })^{2J+\alpha  } (   \| v \| _{ 1, 2m } + \|  v \|  _{2, 2m + 2 + k} +&  \| v \| _{ 3,  |\beta | -1}  ) \\  &\le   3 (8 K)^{4J + 2\alpha +1} (1 -b s  )^{- \sigma _\ell} .
\end{split} 
\end{equation*} 
Therefore, we deduce from~\eqref{fCA19} that
\begin{equation}  \label{n44}
\begin{split} 
  \int _0^t (1- bs)^{-1}   (1+\eta\|  v \|  &_{1, 2m })^{2J+\alpha  }  (  \| v \| _{ 1, 2m } + \|  v \|  _{2, 2m + 2 + k} +  \| v \| _{ 3,  |\beta | -1} ) \\  \le \frac {  3 (8 K)^{4J + 2\alpha +1}  } {b \sigma  _{  |\beta | }} &  (  1-bt )  ^{ - \sigma  _{  |\beta | } } 
\end{split} 
\end{equation} 
Applying~\eqref{fWTd}, \eqref{fCAb2}, \eqref{fCA26} and~\eqref{n44}, we deduce from~\eqref{fCA24} that
\begin{equation} \label{n40b2}
 \| \langle \cdot \rangle ^{ J-  |\beta |}  D^\beta v \| _{ L^2  } \le  K + \frac { 4 K \CTq \CTc } { b (1-\sigma _J) } \ +  \frac { 4 | \lambda |  \CTc \CTs (8 K)^{4J + 2\alpha +1}  } {b \sigma  _{  |\beta | }}  (1-bt )^{ -\sigma  _{  |\beta | }}.
\end{equation} 
It follows from~\eqref{fCA12b1:1}--\eqref{fCA12b1:3}, \eqref{n49}, \eqref{n40}, \eqref{n40b1}, and~\eqref{n40b2}   that
\begin{equation} \label{fCAb11} 
\Phi  _{ T^\star  } \le  K + \frac { 4 K \CTq \CTc } { b (1-\sigma _J) }  +   \frac { 4  | \lambda | \CTc \CTs (8 K)^{4J + 3\alpha +1}  } {b \sigma  _{  |\beta | }}  . 
\end{equation} 
Finally, we assume that $b_0$ is sufficiently large so that
\begin{equation}  \label{fCAb12} 
 \frac { 4K \CTq \CTc } { b_0 (1-\sigma _J) }  +   \frac { 4 | \lambda |  \CTc \CTs (8 K)^{4J + 3\alpha +1}  } {b_0 \sigma  _{  |\beta | }}  \le K
\end{equation} 
and
\begin{equation} \label{fCAb14} 
  \Bigl( K^\alpha  + \frac { \alpha (4K )^{ \alpha +2}  } {b_0 (1- \sigma _J )} + \frac { \alpha  | \Im \lambda  | } {b_0 }  \sup _{ 0\le t< 1 }   [  t^{\alpha \sigma _1}   | \log t | ] \Bigr)^{\frac {1} {\alpha }}  \le 2K.
\end{equation} 
We deduce from~\eqref{fCAc5} and~\eqref{fCAb14}, that if $b\ge b_0$, then
\begin{equation} \label{fCAb20} 
\Phi _{4, T^\star}  \le  2K.
\end{equation} 
Moreover, we deduce from~\eqref{fCAb11} and~\eqref{fCAb12}, that if $b\ge b_0$, then
\begin{equation} \label{fCAb21} 
\Phi _{ T^\star } \le 2K .
\end{equation} 
Inequalities~\eqref{fCAb20} and~\eqref{fCAb21} yield
$\Psi _{ T^\star }   \le 2 K $, 
which contradicts~\eqref{fFIN6}, thus  completing the proof.
\end{proof}

\begin{rem} \label{eRem4} 
Note that the only place in the proof of Proposition~\ref{p3} where we use the assumption $\Im \lambda \le 0$ is estimate~\eqref{fCAb9}.
Yet, the conclusion of Proposition~\ref{p3} fails if $\Im \lambda >0$. More precisely, if $\DIb \in \Spa$ satisfies~\eqref{fWTd} and $b>0$, then there is no solution $v \in C ( [0, \frac {1} {b} ), \Spa )$ of~\eqref{NLS3} satisfying~\eqref{fWT1}. Indeed, suppose that $v \in C ( [0, \frac {1} {b} ), \Spa )$ satisfies~\eqref{NLS3} and~\eqref{fWT1}. Applying identity~\eqref{fCAc2} with $x=0$ and integrating in $t$ yields
\begin{equation*}
0 \ge  -  w (  t, 0)^{ - \alpha }   = -  w ( 0,0)^{ - \alpha } + \alpha   \int _0^t  w(s, 0)^{ - \alpha -1}  L (s, 0)\, ds   + \frac {\alpha } {b}  \Im \lambda  | \log (  1-bt ) | .
\end{equation*}
Since the integral on the right-hand side of the above inequality is bounded as $t\uparrow \frac {1} {b}$ by~\eqref{fWT1}, we obtain a contradiction by letting $t\uparrow \frac {1} {b}$.

\end{rem} 

\section{Asymptotics  for~\eqref{NLS2}} \label{sASY} 

We now turn to the study of the asymptotic of the solution $v$ as
$t \rightarrow \frac{1} {b}$. We prove the following:

\begin{prop} \label{p4}
Suppose $\Im \lambda \le 0$.
 Assume~\eqref{fCA1}, \eqref{n2}, \eqref{n11} and let
$\Spa $ be defined by~\eqref{n27}-\eqref{n28}. Let $K\ge 1$, and let $b_0$ be given by Proposition~$\ref{p3}$. Suppose $b\ge b_0$, let $ \DIb
\in \Spa $  satisfy~\eqref{fWTd}, and let $v\in C([0, \frac {1} {b}), \Spa )$ be the solution of~\eqref{NLS2} given by Proposition~$\ref{p3}$.
There exists  $b_{1} \ge  b_{0}$ such that if $b \ge  b_{1}$, then there exist $f_{0}, w _0 \in L^{\infty},$ with $f_0$ real valued,  $ \|
f_{0}  \| _{L^{\infty}} \le \frac{1}{2}$, $w_0 \not \equiv 0$ and $ \langle \cdot \rangle ^n w_0\in L^\infty  (\R^N )  $ such that 
\begin{equation} \label{n63}
\| \langle \cdot \rangle ^n ( v (  t , \cdot )  - w_0 (\cdot ) \psi (t, \cdot ) e^{- i \theta (t, \cdot )} ) \|
_{L^{\infty}} \le  C(1-bt)^{  1-\sigma_J  }
\end{equation}
for all $0\le t<\frac {1} {b}$, where
\begin{equation} \label{fTCA1} 
\psi (  t,x )  =  \left(  \frac{ 1+ f_0 (x) } {1+ f_0 (x)+ \frac {\alpha |\Im \lambda |} {b}  |
\DIb (x)  | ^{\alpha} | \log(1-bt)|  } \right)  ^{\frac {1} {\alpha }}
\end{equation} 
and
\begin{equation} \label{fTCA2} 
\theta (  t,x )  =\frac {\Re \lambda} {b} \int_{0}^{ | \DIb (x) | ^{\alpha}   | \log (1-bt )| } \frac{ d\tau } {1+ f_0 (x)+ \tau  \frac {\alpha |\Im \lambda |} {b}    }  .
\end{equation}
In addition, if $\Im \lambda =0$, then 
\begin{equation} \label{fSPPz} 
\psi (t, x) \equiv 1  \text{ and } \theta   (t, x) = \frac {\lambda } {b}  | w_0 (  x)  |^\alpha | \log (1- bt) | .
\end{equation}
Furthermore,
\begin{equation} \label{fp4:1} 
 \| v (t) \| _{ L^\infty  }\goto  _{ t \uparrow \frac {1} {b} }  \| w_0 \| _{ L^\infty  }
\end{equation} 
if $ \Im \lambda =0 $ and
\begin{equation} \label{fp4:2} 
  | \log (1-bt) |^{\frac {N} {2}}  \| v (t) \| _{ L^\infty  }\goto  _{ t \uparrow \frac {1} {b} }  \Bigl(  \frac {b} {\alpha  | \Im \lambda |}   \Bigr)^{\frac {N} {2 }}
\end{equation} 
if $ \Im \lambda <0 $.
\end{prop}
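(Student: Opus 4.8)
The plan is to exploit that $v$ does not vanish and to pass to polar coordinates. Write $v=|v|\,e^{i\varphi}$, where $\varphi=\varphi(t,x)$ is the branch of $\arg v$ that depends continuously on $t$ (for each fixed $x$) with $\varphi(0,x)=\arg\DIb(x)$; as in the derivation of~\eqref{n46}, $v$ is of class $C^1$ in $t$. Multiplying~\eqref{NLS2} by $\overline v$ and taking real and imaginary parts gives the pointwise identities
\begin{equation*}
\partial_t\bigl(|v|^{-\alpha}\bigr)=-\alpha|v|^{-\alpha-1}L+\alpha|\Im\lambda|(1-bt)^{-1},\qquad
\partial_t\varphi=\frac{\Re(\overline v\,\Delta v)}{|v|^2}-\Re\lambda\,(1-bt)^{-1}|v|^{\alpha},
\end{equation*}
with $L$ as in~\eqref{fDFL} (the first is~\eqref{n46:b1}). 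From Proposition~\ref{p3} ($\Psi_T\le 4K$) and~\eqref{fWTd} one has the pointwise bounds $(4K)^{-1}(1-bt)^{\sigma_1}\le\langle x\rangle^{n}|v(t,x)|\le 4K$, $\langle x\rangle^{n}|\Delta v(t,x)|\le C(1-bt)^{-\sigma_2}$ and $K^{-1}\le\langle x\rangle^{n}|\DIb(x)|\le K$; since $|L|\le|\Delta v|$, this yields
\begin{equation*}
|v|^{-\alpha-1}|L|\le C\,\langle x\rangle^{n\alpha}(1-bt)^{-(\alpha+1)\sigma_1-\sigma_2},\qquad
\frac{|\Re(\overline v\,\Delta v)|}{|v|^{2}}\le C(1-bt)^{-\sigma_1-\sigma_2},
\end{equation*}
and by~\eqref{fCA9}--\eqref{fCA10} the exponents satisfy $(\alpha+1)\sigma_1+\sigma_2\le\sigma_J<1$ and $\sigma_1+\sigma_2\le\sigma_J<1$. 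Thus the ``regular'' terms above are integrable up to $\tfrac1b$, and only the $(1-bt)^{-1}$ terms are genuinely singular; these will produce the corrections $\psi$ and $\theta$.

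\emph{Modulus.} Integrating the first identity and using $\int_0^t(1-bs)^{-1}\,ds=\tfrac1b|\log(1-bt)|$, I define $f_0(x):=-\alpha|\DIb(x)|^{\alpha}\int_0^{1/b}|v|^{-\alpha-1}L\,ds$ and $|w_0(x)|:=|\DIb(x)|\,(1+f_0(x))^{-1/\alpha}$. The first displayed bound and $\langle x\rangle^{n}|\DIb|\le K$ give $|f_0(x)|\le C K^{\alpha}/(b(1-(\alpha+1)\sigma_1-\sigma_2))$, hence $\|f_0\|_{L^\infty}\le\tfrac12$ once $b\ge b_1$ with $b_1$ large; $f_0$ is real ($L$ is real); and $\langle x\rangle^{n}|w_0|=\langle x\rangle^{n}|\DIb|\,(1+f_0)^{-1/\alpha}$ is bounded above and bounded below by a positive constant, so $\langle\cdot\rangle^{n}w_0\in L^\infty$ and $w_0\not\equiv0$. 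Using $1+f_0=|w_0|^{-\alpha}|\DIb|^{\alpha}$ one checks that, with $\psi$ as in~\eqref{fTCA1}, $|w_0|^{-\alpha}\psi^{-\alpha}=|\DIb|^{-\alpha}-\alpha\int_0^{1/b}|v|^{-\alpha-1}L\,ds+\tfrac{\alpha|\Im\lambda|}{b}|\log(1-bt)|$, whence the exact identity
\begin{equation*}
|v(t,x)|^{-\alpha}-|w_0(x)|^{-\alpha}\psi(t,x)^{-\alpha}=\alpha\int_t^{1/b}|v|^{-\alpha-1}L\,ds=O\bigl(\langle x\rangle^{n\alpha}(1-bt)^{1-(\alpha+1)\sigma_1-\sigma_2}\bigr).
\end{equation*}
Dividing by $\langle x\rangle^{n\alpha}$ (so the error becomes uniform in $x$) and using that $\langle x\rangle^{n}|v|$ and $\langle x\rangle^{n}|w_0|\psi$ are bounded, so that $\xi\mapsto\xi^{-1/\alpha}$ is Lipschitz on the relevant range, I obtain $\bigl|\,|v(t,x)|-|w_0(x)|\psi(t,x)\,\bigr|\le C(1-bt)^{1-\sigma_J}$ uniformly in $x$. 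When $\Im\lambda=0$ the extra term in the denominator of~\eqref{fTCA1} vanishes, so $\psi\equiv1$, and $\theta$ collapses to $\tfrac{\lambda}{b}|w_0|^{\alpha}|\log(1-bt)|$ (using $|w_0|^{\alpha}=|\DIb|^{\alpha}/(1+f_0)$); this is~\eqref{fSPPz}.

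\emph{Phase.} Set $P(s,x):=|w_0(x)|^{-\alpha}+\tfrac{\alpha|\Im\lambda|}{b}|\log(1-bs)|$. The substitution $\tau=|\log(1-bs)|$ followed by $u=|\DIb(x)|^{\alpha}\tau$, together with $|w_0|^{-\alpha}=(1+f_0)|\DIb|^{-\alpha}$, identifies the phase $\theta$ of~\eqref{fTCA2} with $\Re\lambda\int_0^{t}(1-bs)^{-1}P(s,x)^{-1}\,ds$. From the modulus identity at time $s$, $|v(s,x)|^{-\alpha}=P(s,x)+R(s,x)$ with $|R|\le C\langle x\rangle^{n\alpha}(1-bs)^{1-(\alpha+1)\sigma_1-\sigma_2}$; since $P\ge|w_0|^{-\alpha}\ge c\langle x\rangle^{n\alpha}$ and (enlarging $b_1$) $|R|\le\tfrac12 c\langle x\rangle^{n\alpha}$, inverting gives $\bigl|\,|v(s,x)|^{\alpha}-P(s,x)^{-1}\,\bigr|\le C(1-bs)^{1-(\alpha+1)\sigma_1-\sigma_2}$ uniformly in $x$. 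I then define
\[
\arg w_0(x):=\varphi(0,x)+\int_0^{1/b}\frac{\Re(\overline v\,\Delta v)}{|v|^{2}}\,ds-\Re\lambda\int_0^{1/b}(1-bs)^{-1}\bigl(|v|^{\alpha}-P^{-1}\bigr)\,ds,
\]
both integrals converging by the bounds above, so that $w_0=|w_0|e^{i\arg w_0}$ is a well-defined function. Integrating the second identity and subtracting, the singular contributions cancel:
\begin{equation*}
\varphi(t,x)+\theta(t,x)-\arg w_0(x)=-\int_t^{1/b}\frac{\Re(\overline v\,\Delta v)}{|v|^{2}}\,ds+\Re\lambda\int_t^{1/b}(1-bs)^{-1}\bigl(|v|^{\alpha}-P^{-1}\bigr)\,ds,
\end{equation*}
which is $O\bigl((1-bt)^{1-\sigma_J}\bigr)$ uniformly in $x$. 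Combining with the modulus estimate through $|\rho e^{ia}-\rho'e^{ia'}|\le|\rho-\rho'|+\rho'|a-a'|$ applied to $\rho=\langle x\rangle^{n}|v|$, $\rho'=\langle x\rangle^{n}|w_0|\psi\le C$, yields~\eqref{n63}. Finally, \eqref{fp4:1}--\eqref{fp4:2} follow: $\|v(t)\|_{L^\infty}$ differs from $\||w_0|\psi(t,\cdot)\|_{L^\infty}$ by $O((1-bt)^{1-\sigma_J})$, and $\||w_0|\psi(t,\cdot)\|_{L^\infty}^{\alpha}=\bigl(\|w_0\|_{L^\infty}^{-\alpha}+\tfrac{\alpha|\Im\lambda|}{b}|\log(1-bt)|\bigr)^{-1}$ (compute $(|w_0|\psi)^{-\alpha}=|w_0|^{-\alpha}+\tfrac{\alpha|\Im\lambda|}{b}|\log(1-bt)|$ and take the infimum over $x$); letting $t\uparrow\tfrac1b$, recalling $\tfrac1\alpha=\tfrac N2$ and that $(1-bt)^{1-\sigma_J}$ absorbs any power of $|\log(1-bt)|$, gives the two limits.

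The step I expect to be the main obstacle is the weight bookkeeping in the modulus analysis: the remainder $\int_t^{1/b}|v|^{-\alpha-1}L\,ds$ grows like $\langle x\rangle^{n\alpha}$ — exactly the growth of $|v|^{-\alpha}$ itself — so the error estimates only become uniform in $x$ after dividing by $\langle x\rangle^{n\alpha}$ (equivalently, after inverting against the lower bound $P\gtrsim\langle x\rangle^{n\alpha}$). This is also the place where $b_1$ must be chosen substantially larger than $b_0$, so that $\|f_0\|_{L^\infty}\le\tfrac12$ and $|R|\le\tfrac12 c\langle x\rangle^{n\alpha}$. The required exponent inequalities $(\alpha+1)\sigma_1+\sigma_2\le\sigma_J$ and $\sigma_1+\sigma_2\le\sigma_J$ are immediate from the geometric growth of the cascade~\eqref{fCA9}.
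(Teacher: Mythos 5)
Your proof is correct, and it takes a genuinely different route from the paper's in its second half. The modulus analysis is essentially the paper's: both integrate the ODE~\eqref{n46:b1} for $|v|^{-\alpha}$, define $f_0$ as the improper integral $-\alpha|\DIb|^{\alpha}\int_0^{1/b}|v|^{-\alpha-1}L\,ds$, and control everything through $\Psi_T\le 4K$ and the cascade inequality $(\alpha+1)\sigma_1+\sigma_2\le\sigma_3<1$. Where you diverge is the phase and the construction of $w_0$. The paper substitutes $v=w\psi e^{-i\theta}$ with $w$ complex-valued, computes $iw_t=\frac{e^{i\theta}}{\psi}\bigl(-\Delta v+\lambda(1-bt)^{-1}(|v|^{\alpha}-\widetilde v^{\alpha})v\bigr)$, shows $\|\langle\cdot\rangle^{n}w_t\|_{L^\infty}\le C(1-bt)^{-\sigma_J}$ (the margin $\sigma_3<\sigma_J$ being spent on absorbing the factor $\|\psi^{-1}\|_{L^\infty}\lesssim(1+|\log(1-bt)|)^{1/\alpha}$), and obtains $w_0$ as the limit of $w(t)$; it must then prove $w_0\not\equiv0$ by a separate contradiction argument, a Gronwall lower bound on $\|v(t)\|_{L^2}$. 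You instead pass to polar coordinates, integrate the modulus and phase ODEs separately, and assemble $w_0=|w_0|e^{i\arg w_0}$ explicitly with $|w_0|=|\DIb|(1+f_0)^{-1/\alpha}$. This buys you the nonvanishing of $w_0$ for free --- indeed a pointwise lower bound $\langle x\rangle^{n}|w_0|\ge c>0$ --- makes~\eqref{fSPPz} transparent, avoids the logarithmic loss (your error is $O((1-bt)^{1-\sigma_3})$, stronger than needed), and yields~\eqref{fp4:1}--\eqref{fp4:2} from the exact identity $\||w_0|\psi(t,\cdot)\|_{L^\infty}^{-\alpha}=\|w_0\|_{L^\infty}^{-\alpha}+\frac{\alpha|\Im\lambda|}{b}|\log(1-bt)|$ rather than the paper's two-sided estimate of $|\log(1-bt)|\,\widetilde v^{\alpha}$. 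The price is the branch bookkeeping for $\arg v$ (harmless, since $v$ is continuous and nowhere zero on $[0,\frac1b)\times\R^N$) and the weight gymnastics you correctly flag: the remainder in the $|v|^{-\alpha}$ identity grows like $\langle x\rangle^{n\alpha}$, exactly matching $P\gtrsim\langle x\rangle^{n\alpha}$, so uniformity in $x$ only emerges after inversion; note also that the estimate you actually derive there is the weighted one, $\langle x\rangle^{n}\bigl|\,|v|-|w_0|\psi\,\bigr|\le C(1-bt)^{1-\sigma_J}$, which is what the final triangle inequality requires.
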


\begin{proof}
We first determine the asymptotic behavior of $ |v|$.
Integrating equation~(\ref{n46:b1}) on
$(0,t)$ with $0\le  t<\frac {1} {b}$, we obtain
\begin{equation} \label{fCAb26}
\frac {1} { \alpha  |v| ^\alpha  } = \frac {1} { \alpha  | \DIb | ^\alpha } + \frac { |\Im \lambda |} {b}  | \log (1-bt) | -\int_0 
^t  |v|^{-\alpha -1} L ,
\end{equation}
where $L$ is defined by~\eqref{fDFL}, so that
\begin{equation} \label{n110}
 |v| ^\alpha  = \frac {  | \DIb | ^\alpha } {1 + f + \frac {\alpha  |\Im \lambda |} {b}  | \DIb | ^\alpha  | \log (1-bt) | }
 \end{equation}
 with 
 \begin{equation} \label{n110:b1}
 f(t,x)= - \alpha  \int _0^t  | \DIb (x) | ^\alpha  |v (s, x) |^{-\alpha -1} L(s, x) \, ds .
 \end{equation} 
Since $ \| \DIb \|_\Spa \le K$, we have $\langle x\rangle^n  | \DIb (x) |\le K$. 
Moreover,
\begin{gather*} 
( \langle x \rangle ^n  |v (s, x)|)^{- \alpha -1} \le (4K) ^{\alpha +1} (1- bs)^{- (\alpha +1) \sigma _1} \\
\langle x \rangle ^n  | L(s,x) |\le \langle x \rangle ^n  | \Delta v (s, x)|\le 4K (1- bs)^{- \sigma _2}
\end{gather*} 
by~\eqref{fWT1}. Since $ (\alpha+1) \sigma
_1 +\sigma_2 \le (\alpha+2) \sigma_{2} \le \sigma_3<1$ by~\eqref{fCA9}, we deduce that
\begin{equation} \label{n110:b2}
\begin{split} 
 | \DIb (x) | ^\alpha  |v (s, x) |^{-\alpha -1}  | L(s, x) | & \le K (4K)^{\alpha +2} (1 - bs)^{- (\alpha +1) \sigma _1 - \sigma _2}  \\ & \le  K (4K)^{\alpha +2} (1 - bs)^{- \sigma _3}.
\end{split} 
\end{equation} 
Thus we see that the integral in~\eqref{n110:b1} is convergent in $L^\infty (\R^N ) $ as $t\uparrow \frac {1} {b}$. It follows that $f$ can be extended to a continuous function $[0, \frac {1} {b}] \to L^\infty  (\R^N ) $ and we set
\begin{equation} \label{fDFz} 
f_0 = f \Bigl( \frac {1} {b} \Bigr) =   - \alpha  \int _0^{\frac {1} {b}}  | \DIb (x) | ^\alpha  |v (s, x) |^{-\alpha -1} L(s, x) \, ds  .
\end{equation} 
We note that by~\eqref{n110:b1}, \eqref{n110:b2} and~\eqref{fDFz}, 
\begin{equation*}
 \| f  (t)  \| _{ L^\infty  } \le \frac {\alpha K (4K)^{\alpha +2} } {b (1- \sigma _3)} 
\end{equation*} 
and
\begin{equation*}
 \| f (t) - f_0 \| _{ L^\infty  } \le  \frac {\alpha K (4K)^{\alpha +2} } {b (1- \sigma _3)} (1- bt)^{1 - \sigma _3}  
\end{equation*} 
for $0\le t\le \frac {1} {b}$.
In particular, if $b_1\ge b_0$ is sufficiently large and $b\ge b_1$, then
\begin{gather} 
 \| f (t) \| _{ L^\infty  } \le  \frac {1} {2}  \label{n118} \\
  \| f (t) - f_0 \| _{ L^\infty  } \le (1-bt)^{1-\sigma _3 }  \label{n114} 
\end{gather} 
for all $0 \le t \le \frac {1} {b}$. 
Therefore, $1+ f_0>0$ by~\eqref{n118}, and it follows from formula~\eqref{fTCA1} that
\begin{equation} \label{n123:1}
0 \le \psi  \le 1.
\end{equation} 
Moreover,  $1 -f(t) \ge \frac {1} {2}$  so that
\begin{equation} \label{n117}
\left\| \frac{1}{1+f (  t ) + \frac{\alpha
|\Im \lambda | } {b} | \DIb | ^{\alpha} | \log(1-bt)| } \right\| _{L^{\infty}}\le 2
\end{equation}
for all $0\le t < \frac {1} {b}$. 
We set
\begin{equation} \label{n123}
 \widetilde{v}  (t,x) = \left(  \frac{ | \DIb (x) | ^{\alpha}} {1+ f_0 (x)+ \frac {\alpha |\Im \lambda |} {b}  |
\DIb (x)  | ^{\alpha} | \log(1-bt)|  } \right)  ^{\frac {1} {\alpha }} .
\end{equation}
It follows from~\eqref{fWTd}  and~\eqref{n117} that
\begin{equation} \label{n123:2}
  \| \langle \cdot \rangle ^n \widetilde{v}(t, \cdot ) \| _{ L^\infty  }\le 2^{\frac {1} {\alpha }}K.
\end{equation} 
In addition, we deduce from~\eqref{n110}, \eqref{n123},  \eqref{n114} and \eqref{n117} (with $t$ and with $t= \frac {1} {b}$) that 
\begin{equation} \label{n111}
 \| \langle \cdot \rangle ^{n \alpha } ( |v (t, \cdot ) |^\alpha -  \widetilde{v}  (t, \cdot ) ^\alpha ) \| _{ L^\infty  } \le 4  \| \langle \cdot \rangle ^n  \DIb \| _{L^{\infty} }^{\alpha} (1-bt)^{1-\sigma_{3}} \le 4  K^{\alpha} (1-bt)^{1-\sigma_{3}} 
\end{equation}
for $0 \le  t<1/b$.  Next, we introduce the decomposition
\begin{equation} \label{n113}
v (  t,x )  =w (  t,x )  \psi (  t,x ) e^{-i\theta (  t,x )  }
\end{equation}
where $\psi $ and $\theta $ are defined by~\eqref{fTCA1} and~\eqref{fTCA2}. 
Differentiating~\eqref{n113} with respect to $t$, we obtain
\begin{equation} \label{fTCA4} 
i w_t = i  \frac {e^{i \theta }} {\psi } v_t - i w \frac {\psi _t} {\psi } - w \theta _t .
\end{equation} 
Moreover, it follows from~\eqref{fTCA1} and~\eqref{n123} that
\begin{equation*} 
 \frac {\psi _t} {\psi } = -  | \Im \lambda | (1-bt) ^{-1}  \widetilde{v} ^\alpha  =   \Im \lambda  (1-bt) ^{-1}  \widetilde{v} ^\alpha 
\end{equation*}
and from~\eqref{fTCA2} and~\eqref{n123} that
\begin{equation*} 
\theta _t = \Re \lambda (1-bt) ^{-1}  \widetilde{v} ^\alpha .
\end{equation*} 
Thus we see that
\begin{equation}  \label{fTCA5} 
- i w \frac {\psi _t} {\psi } - w \theta _t= - \lambda (1-bt) ^{-1}  \widetilde{v} ^\alpha w = - \frac {e^{i\theta }} {\psi } \lambda (1-bt) ^{-1}  \widetilde{v} ^\alpha  v.
\end{equation} 
Formulas~\eqref{fTCA4}, \eqref{fTCA5}  and~\eqref{NLS2} yield
\begin{equation}  \label{fTCA6} 
\begin{split} 
i w_t & =    \frac {e^{i \theta }} {\psi } ( i  v_t - \lambda (1-bt) ^{-1}  \widetilde{v} ^\alpha  v ) \\
& = \frac {e^{i \theta }} {\psi } (- \Delta v + \lambda (1-bt)^{-1} ( |v|^\alpha -  \widetilde{v}  ^\alpha ) v ) .
\end{split} 
\end{equation} 
It follows that
\begin{equation} \label{fTCA7} 
\begin{split} 
 \| \langle \cdot \rangle ^n w_t \| _{ L^\infty  } \le & \|\psi ^{-1} \| _{ L^\infty  }  \| \langle \cdot \rangle ^n \Delta v \| _{ L^\infty  } \\ & +  \|\psi ^{-1} \| _{ L^\infty  }   |\lambda | (1-bt)^{-1} \| \,  |v|^\alpha -  \widetilde{v}  ^\alpha \|  _{ L^\infty  }  \| \langle \cdot \rangle ^n v \| _{ L^\infty  } .
\end{split} 
\end{equation} 
Note that by~\eqref{fTCA1}
\begin{equation*} 
\frac {1} {\psi } =  \Bigl( 1 + \frac { \frac{\alpha |\Im \lambda | } {b} | \DIb | ^{\alpha} | \log(1-bt)| } {1 + f_0} \Bigr)^{ \frac {1} {\alpha }} .
\end{equation*} 
Since $ \| f_0 \| _{ L^\infty  }\le \frac {1} {2}$ by~\eqref{n118}, we deduce that
\begin{equation} \label{fTBB2} 
\|\psi ^{-1} \| _{ L^\infty  }  \le  \Bigl( 1 + 2  \frac{\alpha |\Im \lambda | } {b} \| \DIb \| _{ L^\infty  } ^{\alpha} | \log(1-bt)|  \Bigr)^{ \frac {1} {\alpha }} .
\end{equation} 
Moreover, $\| \langle \cdot \rangle ^n \Delta v \| _{ L^\infty  } \le 4K (1-bt )^ {-\sigma_2 }$ and $  \| \langle \cdot \rangle ^n v \| _{ L^\infty  }  \le 4K$ by~\eqref{fWT1}. Therefore, it follows from~\eqref{fTCA7}, \eqref{fTBB2}  and~\eqref{n111} that
\begin{equation*}
 \| \langle \cdot \rangle ^n w_t \| _{ L^\infty  }  \le C (1+  | \log (1-bt) |)^{\frac {1} {\alpha }} [ (1-bt )^ {-\sigma_2 }  +  (1-bt)^{-\sigma _3} ]  \le C (1-bt) ^{-\sigma _J}
\end{equation*} 
since $\sigma _2< \sigma _3< \sigma _J$. We deduce that
\begin{equation*} 
 \| \langle \cdot \rangle ^n (w (t) - w(s) ) \| _{ L^\infty  } \le C (1- bt) ^{1- \sigma _J}
\end{equation*} 
for all $0\le s<t< \frac {1} {b}$, so that there exists $w_0$ such that $ \langle \cdot \rangle ^n w_0\in L^\infty  (\R^N )  $ and
\begin{equation} \label{n121}
 \| \langle \cdot \rangle ^n (w (t) - w_0 ) \| _{ L^\infty  } \le C (1- bt) ^{1- \sigma _J}
\end{equation} 
for all $0\le t<\frac {1} {b}$.
It follows from~\eqref{n113},  \eqref{n123:1}, and~\eqref{n121} that
\begin{equation} \label{fTTA1} 
 \| \langle \cdot \rangle ^n (v (t, \cdot )  - w_0 (\cdot )  \psi (t, \cdot )  e^{-i \theta (t, \cdot ) } ) \| _{ L^\infty  }     \le C (1- bt) ^{1- \sigma _J}
\end{equation} 
which yields~\eqref{n63}. 
We next prove that $w_0 \not = 0$. 
(Note that  if $\Im \lambda =0$, this is obvious by conservation of the $L^2$ norm.)
Assuming by contradiction that $w_0  = 0$, we deduce from~\eqref{fTTA1} and the property $n>\frac {N} {2}$ that  
\begin{equation} \label{fTBB1} 
\| v(t) \| _{ L^2 } +  \| v(t) \| _{ L^\infty  } \le C (1- bt) ^{1- \sigma _J}. 
\end{equation} 
On the other hand, it follows from equation~\eqref{NLS2} that
\begin{equation*} 
\frac {1} {2} \frac {d} {dt}  \| v(t)\| _{ L^2 }^2 = -\frac {  |\Im \lambda |} { 1-bt } \int  _{ \R^N  }  | v|^{\alpha +2} \ge  -\frac { c } {( 1-bt) ^{1 -  \alpha (1- \sigma _J)} }  \| v(t) \| _{ L^2 }^2 
\end{equation*} 
for some $c>0$, by using the $L^\infty $ estimate of~\eqref{fTBB1}. Therefore,
\begin{equation*} 
 \| v(t)\| _{ L^2 }^2 \ge  \| \DIb \| _{ L^2 }^2 \exp  \Bigl( -2c \int _0^{\frac {1} {b}} \frac {ds} {( 1-bs) ^{1 -  \alpha (1- \sigma _J)}} \Bigr) >0.
\end{equation*} 
This is absurd, since $ \|v(t)\| _{ L^2 } \to 0$ as $t\uparrow \frac {1} {b}$ by the $L^2 $ estimate of~\eqref{fTBB1}. 

We now prove~\eqref{fSPPz}, so we assume $\Im \lambda =0$. The first identity is an immediate consequence of~\eqref{fTCA1}.  Moreover, it follows from~\eqref{fTCA2} that
\begin{equation} \label{fSPPu} 
\theta (  t,x )  = \frac {\lambda } {b}   | \log (1-bt )|  \frac{   | \DIb (x) | ^{\alpha}  } {  1+ f_0 (x)  }    .
\end{equation}
On the other hand, we deduce from~\eqref{n123}  that
$ \widetilde{v}  (t,x) =    (1+ f_0 (x))^{ - \frac {1} {\alpha }}   | \DIb (x)|$, so that~\eqref{n111}  yields
\begin{equation*}
 |v (t, \cdot ) |^\alpha \goto  _{ t\uparrow \frac {1} {b} }  \frac { | \DIb (\cdot )|^\alpha } { 1+ f_0 (\cdot ) } 
 \end{equation*}
in $L^\infty  (\R^N ) $. Since $ |v (  t,x )  | =  |w (  t,x ) |$ by~\eqref{n113} and the first identity in~\eqref{fSPPz}, and $  |w (t, x) | \to  |w_0 (x)|$, we conclude that 
\begin{equation*} 
|w_0 ( \cdot ) |^\alpha = \frac { | \DIb (\cdot )|^\alpha } { 1+ f_0 (\cdot ) } .
\end{equation*} 
The second identity in~\eqref{fSPPz} now follows from~\eqref{fSPPu}. 

If $\Im \lambda =0$, then~\eqref{fp4:1}  is an immediate consequence of~\eqref{n63} and~\eqref{fSPPz}. Assuming now $\Im \lambda <0$, we deduce from~\eqref{n123} that
\begin{equation} \label{fTZ1} 
  | \log(1-bt)| \, \widetilde{v} ^\alpha   =     \frac{ | \DIb   | ^{\alpha}  | \log(1-bt)| } {1+ f_0  + \frac {\alpha |\Im \lambda |} {b}  | \DIb  | ^{\alpha} | \log(1-bt)|  }  .
\end{equation}
Since $1+ f_0\ge 0$ by~\eqref{n118},  it follows in particular that
\begin{equation} \label{fTZ2} 
 | \log(1-bt)| \, \| \widetilde{v} ^\alpha   \| _{ L^\infty  } \le \frac {b} {\alpha  | \Im \lambda |} .
\end{equation} 
Moreover, since $1 + f_0 \le 2$, we deduce from~\eqref{fTZ1} that
\begin{equation*} 
  | \log(1-bt)| \, \widetilde{v} ^\alpha (t, 0)  \ge      \frac{ | \DIb (0)   | ^{\alpha}  | \log(1-bt)| } {2 + \frac {\alpha |\Im \lambda |} {b}  | \DIb (0)  | ^{\alpha} | \log(1-bt)|  }  .
\end{equation*}
Since $ |\DIb (0) | >0$ by~\eqref{fWTd}, it follows that 
\begin{equation} \label{fTZ3} 
 \liminf  _{ t\uparrow \frac {1} {b} } | \log(1-bt)| \, \widetilde{v} ^\alpha (t, 0)  \ge   \frac {b} {\alpha  | \Im \lambda |}  .
\end{equation}
Inequalities~\eqref{fTZ2} and~\eqref{fTZ3} yield
\begin{equation*} 
 | \log(1-bt)| \, \| \widetilde{v} ^\alpha (t, \cdot ) \| _{ L^\infty  } \goto _{ t\uparrow \frac {1} {b} }   \frac {b} {\alpha  | \Im \lambda |}  
\end{equation*}
and~\eqref{fp4:2} follows by applying~\eqref{n111}. 
This completes the proof. 
\end{proof}

\section{Proof of Theorems~$\ref{T1}$ and~$\ref{T2}$} \label{sFIN} 

Let $\DIb \in \Spa$ satisfy~\eqref{fTPA11}, let $K>0$ be sufficiently large so that~\eqref{fWTd} holds, and let $b_1$ be given by Proposition~\ref{p4}.
Given $b\ge b_1$,  let $v\in C([0, \frac {1} {b}), \Spa)$ be the corresponding solution of~\eqref{NLS3}  given by Proposition~\ref{p3}. It is easy to verify that $u$ given by the pseudo-conformal
transformation~\eqref{fNLS1:0} satisfies $u\in
C([0,\infty),{\Sigma}) \cap L^\infty ((0,\infty)\times\R^N )$, and is a solution of~\eqref{NLSI} with $ \DI (x) =e^{i\frac{b|x|^2 }{4}}\DIb (x)$.
Moreover, it follows easily from~\eqref{fWT1} and formula~\eqref{fNLS1:0} that $u\in
 L^\infty ((0,\infty), H^1 (\R^N )  )$. (Here we use the property $n -1>\frac {N} {2}$.)
We now apply Proposition~\ref{p4} and, since $n>\frac {N} {2}$, we deduce from~\eqref{n63} that
\begin{equation} \label{fTPA31} 
\| v (  t , \cdot )  - w_0 (\cdot ) \psi (t, \cdot ) e^{- i \theta (t, \cdot )} \|
_{L^\infty \cap L^2} \le  C(1-bt)^{  1-\sigma_J  }.
\end{equation}

If  $\Im\lambda=0$, then~\eqref{T1n1} follows from~\eqref{fp4:1} and~\eqref{fNLS1:0}; and~\eqref{T1n2:2} follows from~\eqref{fTPA31},  \eqref{fSPPz}, and formula~\eqref{fNLS1:0}. This  proves Theorem~\ref{T1}.

It $\Im\lambda<0$, then~\eqref{T1n3} follows from~\eqref{fp4:2} and~\eqref{fNLS1:0}.
Moreover, it follows from~\eqref{fTCA2}  and~\eqref{fTCA1} that 
\begin{equation} \label{fTCA32} 
\theta (  t,x )  =  \frac {\Re \lambda} {\alpha  | \Im \lambda | } \log
 ( \psi (t,x)^{-\alpha })   =  \frac {\Re \lambda} { \Im \lambda  } \log  ( \psi (t,x) ) .
\end{equation}
Estimate~\eqref{T1n4:2} follows from~\eqref{fTPA31}, \eqref{fTCA32}, and formula~\eqref{fNLS1:0}. This  proves Theorem~\ref{T2}.

\end{document}